\documentclass[12pt]{article}
\usepackage{graphicx}

\usepackage{epstopdf}
\usepackage{float}
\usepackage{float}
\usepackage{color}
\usepackage{amssymb}
\usepackage{algorithm}
\usepackage{algorithmic}

\usepackage{mathtools}
\usepackage[thmmarks,amsmath]{ntheorem}
\usepackage{bm}

\usepackage{extarrows}
\usepackage[hang,flushmargin]{footmisc}

\usepackage{mathrsfs}
\usepackage[font=footnotesize,skip=0pt,textfont=rm,labelfont=rm]{caption,subcaption}

\usepackage{booktabs} 
\usepackage{tocloft}
\usepackage{hyperref}
\hypersetup{hypertex=true,
	colorlinks=true,
	linkcolor=blue,
	anchorcolor=blue,
	citecolor=blue}
\usepackage{caption}
\usepackage{cite}

{
	\theoremstyle{nonumberplain}
	\theoremheaderfont{\bfseries}
	\theorembodyfont{\normalfont}
	\theoremsymbol{\mbox{$\Box$}}
	\newtheorem{proof}{Proof}
}


\usepackage{theorem}

\newtheorem{theorem}{Theorem}[section]

\newtheorem{lemma}{Lemma}[section]
\newtheorem{definition}{Definition}[section]

\newtheorem{corollary}{Corollary}[section]

\newcommand{\Rmnum}[1]{\uppercase\expandafter{\romannumeral #1}}

{
	\theoremheaderfont{\bfseries}
	\theorembodyfont{\rm}
	
}
\graphicspath{{figure/}}                                 
\usepackage[a4paper]{geometry}                           
\begin{document}
	\title{\textbf{Characterizing chordal $\llcorner$-EPG graphs via admissible clique tree orientations}}
	
	\author{Bin Sun$^{1}$, Shou-Jun Xu$^{1}$, Yu Yang$^{2,\dagger}$\\
	{\rmfamily\small $^{1}$School of Mathematics and Statistics, Lanzhou University, Lanzhou, 730000, PR China }\\
	{\rmfamily\small $^{2}$School of Mathematics and Statistics, Huaibei Normal University, Huaibei, 235000, PR China}\\
}

\renewcommand{\thefootnote}{\fnsymbol{footnote}}

\footnotetext[2]{Corresponding author. E-mail address: yyang@chnu.edu.cn (Y. Yang).}
	
	\date{}

	\maketitle
    
	\textbf{Abstract:} A $k$-bend path is a non-self-intersecting polyline that lies on a grid and consists of at most $k+1$ axis-parallel line segments. An $\llcorner$-EPG graph is a graph whose vertices can be represented by 1-bend paths on a grid, where each path is either $\llcorner$, $\shortmid$, or $\text{-}$, such that two vertices are adjacent if and only if the corresponding paths share at least one grid edge. Characterizing chordal $\llcorner$-EPG graphs was explicitly posed as an open problem by Cameron, Chaplick, and Hoàng. We resolve this by giving two equivalent characterizations of connected chordal $\llcorner$-EPG graphs. The first assigns to every connected chordal $\llcorner$-EPG graph an ordered clique partition tree of its vertex set, where the underlying tree is the bipartite incidence graph of the horizontal and vertical edge-intersection components. The second is stated purely in terms of maximal cliques: a connected chordal graph is $\llcorner$-EPG if and only if some clique tree admits an admissible partial orientation. A reduction lemma replaces local clique labels by maximal cliques, linking the two characterizations. For a prescribed clique tree, the existence of an admissible partial orientation reduces to 2-SAT and is decided in $O(nM^2)$ time, where $n$ and $M$ are the numbers of vertices and maximal cliques. Finally, we show that strong chordality is not sufficient: we exhibit a strongly chordal graph that is a minimal forbidden induced subgraph for $\llcorner$-EPG, and a family of split graphs with a unique clique tree for which membership in $\llcorner$-EPG reduces to a neighborhood condition.
	\vspace{1cm}	
	
    \textbf{Keywords:} chordal graph; $\llcorner$-EPG graph; clique tree; partial orientation; 2-SAT

\section{Introduction}
\par Intersection graphs of various types of objects have been extensively studied in the last sixty years. A $k$-bend path is a non-self-intersecting polyline that lies on a grid and consists of at most $k+1$ axis-parallel line segments. For 1-bend paths, there are six possible shapes: $\llcorner$, $\lrcorner$, $\ulcorner$, $\urcorner$, $\shortmid$, and $\text{-}$. Golumbic et al. \cite{golumbic2009edge} introduced the {\em Edge intersection graph of Paths on a Grid} ({\em EPG graph}) whose vertices can be represented by paths on a grid such that two vertices are adjacent if and only if the corresponding paths share at least one grid edge. The authors also showed that every graph is an EPG graph. If a graph admits such a representation in which each path has at most $k$ bends, then it is called a {\em $B_{k}$-EPG graph}. Specifically, an {\em $\llcorner$-EPG} graph is a $B_1$-EPG graph whose vertices can be represented by 1-bend paths on a grid, where each path is either $\llcorner$, $\shortmid$, or $\text{-}$.

\par EPG graphs were introduced to study electrical networks \cite{sinden1966topology} and have practical applications in areas such as circuit layout, where wires can be modeled as paths on a grid.

\par In this paper, all graphs are simple, i.e., they contain no loops or multiple edges. Let $G=(V(G),E(G))$ be a graph. We denote by $N(v)$ the {\em open neighborhood} of a vertex $v\in V(G)$, and $N[v]=N(v)\cup\{v\}$ is the {\em closed neighborhood} of $v$. The {\em degree} of $v$ is denoted by $d(v)=|N(v)|$. Let $\Delta(G)$ and $\delta(G)$ denote the {\em maximum degree} and {\em minimum degree} of $G$, respectively. We say that $G'$ is an {\em induced subgraph} of $G$ if $V(G')\subseteq V(G)$ and $E(G')=\{uv\mid uv\in E(G)\text{ and }u,v\in V(G')\}$. Two vertices $u$ and $v$ are {\em incompatible} if $N(u)\nsubseteq N(v)$ and $N(v)\nsubseteq N(u)$. A vertex $v$ is {\em simplicial} if $N(v)$ is a clique. A graph $G$ is {\em chordal} if and only if every induced subgraph of $G$ has a simplicial vertex. A graph is {\em split} if its vertex set has a partition into a clique and an independent set. 

\par A {\em clique tree} of a connected chordal graph $G$ is a tree $\mathcal{T}$ with node set $\mathcal{K}(G)$ such that, for every $v\in V(G)$, the nodes containing $v$ induce a connected subtree, denoted by $\mathcal{T}_v$. This is the {\em running-intersection property}. The classical subtree representation of chordal graphs provides such trees \cite{gavril1974intersection}. For a clique tree edge $KQ$, we call $S_{KQ}=K\cap Q$ its {\em separator}. Every such separator is nonempty when $G$ is connected: an empty separator would split the clique tree into two sides with disjoint vertex sets and no graph edge between them. For an interval graph, its maximal cliques admit a linear order in which the cliques containing any fixed vertex occur consecutively \cite{fulkerson1965incidence}.

\par Having established the necessary terminology, we now state the main problem. Asinowski and Ries \cite{asinowski2012some} characterized $B_1$-EPG graphs within chordal bull-free graphs, chordal claw-free graphs, chordal diamond-free graphs, and special cases of split graphs. Cameron et al. \cite{cameron2016edge} gave characterizations and polynomial-time recognition algorithms for gem-free split $\llcorner$-EPG graphs and bull-free split $\llcorner$-EPG graphs, and showed that recognizing $\llcorner$-EPG graphs is NP-complete in general. Deniz et al. \cite{deniz2018split,deniz2020some} further studied subclasses of split $B_1$-EPG graphs. All of these results rely on subgraph constraints. Further algorithmic and hardness results on subclasses of $B_1$-EPG graphs appear in \cite{Bessy2020independent,silva2025clique,wang2025algorithm,epstein2020hardness,epstein2013approximation,wang2026domination}. The present paper removes these restrictions and characterizes the full class of connected chordal $\llcorner$-EPG graphs.

\par The paper is organized as follows. Section 2 develops the ordered clique partition trees and their geometric realization. Section 3 proves the characterization by clique tree. Section 4 gives an example of this process: starting from its $\llcorner$-EPG representation, construct $T_{\mathcal{R}}$; then construct $\mathcal{U}$; and finally construct $\mathcal{T}$. Section 5 gives the complete 2-SAT reduction for deciding whether a given clique tree admits an admissible partial orientation. Section 6 presents a strongly chordal graph $H$ that is not an $\llcorner$-EPG graph, together with a class of split graphs with a unique clique tree admitting an admissible partial orientation.

\section{Ordered clique partition tree}
\par We present the first characterization of connected chordal $\llcorner$-EPG graphs, obtained by constructing an ordered clique partition tree of $G$. We next show that allowing straight paths does not enlarge the class.
\begin{lemma}\label{le2.1}
Every $\llcorner$-EPG graph has a representation in which both arms of every path have positive length and all endpoints have integer coordinates.
\end{lemma}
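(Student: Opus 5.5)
We start from an arbitrary $\llcorner$-EPG representation of $G$ on a grid. Each path is a $\llcorner$ with corner point $c$, a vertical arm going up from $c$ and a horizontal arm going right from $c$, or it is a straight vertical or horizontal segment. The plan has two steps. First we rescale the grid so that there is room to work. Then we convert every straight path into a genuine $\llcorner$ by adding a tiny arm that creates no new edge intersections. Integrality of the endpoints is essentially automatic, since a grid representation already uses grid points. We keep integrality at the end by one more rescaling.

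\textbf{Step 1 (refinement).} Multiply all coordinates by $3$. The old grid lines become every third line of a finer grid $3\mathbb{Z}\times 3\mathbb{Z}$, and every old grid edge becomes three consecutive new grid edges. Two paths share an old grid edge if and only if they share a new grid edge. This holds because after scaling both paths lie on lines $x\equiv 0$ or $y\equiv 0 \pmod 3$, and their overlaps are unions of scaled old edges. So the intersection graph is unchanged, and all endpoints lie in $3\mathbb{Z}^2$.

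\textbf{Step 2 (bending straight paths).} Take a vertical path $P$ from $(a,b)$ to $(a,b')$ with $b<b'$. Replace it by the $\llcorner$ with corner $(a,b)$, the same vertical arm, and a horizontal arm from $(a,b)$ to $(a+1,b)$. The new edge $[(a,b),(a+1,b)]$ lies on the horizontal line $y=b\in 3\mathbb{Z}$. Every original horizontal grid edge used by any path lies on such a line, but it has the form $[(3i+j,b),(3i+j+1,b)]$, and it belongs to some path only if that path covers the whole scaled edge $[(3i,b),(3i+3,b)]$.

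\textbf{Main obstacle.} The added edge may already be used by another path $Q$ that runs horizontally through $(a,b)$, which would create a spurious adjacency. We handle this by choosing the direction of the stub. We take a stub of length $1$ in a direction, or a corner position, that avoids all horizontal paths on line $y=b$ near $x=a$. Since arms must go right, the options are to put the stub at the bottom end with corner $(a,b)$, or to shorten the path instead.

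Our first attempt is the following. Place the corner at $(a,b-1)$, extend the vertical arm down by one unit to the new point $(a,b-1)$, and attach a horizontal arm $[(a,b-1),(a+1,b-1)]$. Both new edges lie on lines $x=a$ and $y=b-1$. The line $y=b-1$ is not $\equiv 0 \pmod 3$, so no original path uses it. The new vertical edge $[(a,b-1),(a,b)]$ lies below the old bottom endpoint of $P$. An original vertical path uses it only if that path covers all of $[(a,b-3),(a,b)]$, which would already overlap $P$ nowhere new only if the two paths meet at a point. That could create a false adjacency, so we must handle the case where this edge is used.

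To avoid this case altogether, we do the full modification in the fresh coordinates. We set the corner at $(a+1,b+1)$, keep the vertical arm on $x=a+1$, and the horizontal arm likewise. The cleanest fix is different: keep $P$'s vertical arm on $x=a$, and attach the horizontal stub at a height $b+1$ strictly inside $P$ by moving the bottom endpoint up by one. Concretely, the new path has corner $(a,b+1)$, vertical arm $[(a,b+1),(a,b')]$, and horizontal arm $[(a,b+1),(a+1,b+1)]$. The stub lies on the line $y=b+1\notin 3\mathbb{Z}$, which is unused by every other path and by every other stub, since other stubs sit at their own corners on distinct columns or rows.

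The removed edge $[(a,b),(a,b+1)]$ is the first third of a scaled old edge, and $P$ still contains the remaining two thirds. Any path that shared that old edge with $P$ still shares a new edge with $P$. Horizontal straight paths are treated symmetrically, with a vertical stub on $x=a+1$. Stubs of different paths could only collide if two modified paths had the same corner, and then both paths already overlap along their arms anyway. Hence the intersection graph is unchanged. All coordinates are integers, and every path is a $\llcorner$ with two arms of positive length.
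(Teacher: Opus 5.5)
Your final construction is correct and is essentially the paper's argument. You scale by $3$, move the bottom or left endpoint of each straight path inward by one unit, and attach a unit stub on row $b+1$ or column $c+1$, which is off $3\mathbb{Z}$. The paper instead shortens each straight path by less than $1/4$, puts the stub on a private fresh row or column, and clears denominators at the end. Your pre-scaling gives integer coordinates from the start, and your observation that two stubs coincide only when the paths share a corner (so were already adjacent) replaces the paper's choice of distinct private lines.

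Delete the abandoned versions of Step 2: the stub at $(a,b)$, the downward extension, and the sentence about the corner $(a+1,b+1)$. As written they describe constructions that can create false adjacencies or destroy existing ones.
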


\begin{proof}
\par Start with a grid representation whose endpoints have integer coordinates. For a horizontal straight path, shift its left endpoint to the right by an amount between 0 and $1/4$, choosing a new column that no existing vertical arm uses. Since the grid has infinitely many columns and the representation uses only finitely many, such a column always exists. Choose different new columns for different horizontal straight paths. Add a short upward arm at the new left endpoint. For a vertical straight path, move its bottom endpoint upwards by an amount between 0 and $1/4$; choose a new row distinct from every existing horizontal row and from the other new rows, and add a short rightward arm there.

\par The original nontrivial segments have length at least one. Every original positive overlap also has length at least one. Moving left endpoints to the right or bottom endpoints upwards by an amount between 0 and $1/4$ therefore preserves every positive overlap. Shortening cannot create a new overlap. Each added vertical arm lies on a private column, and each added horizontal arm lies on a private row; hence no added arm shares a positive-length segment with another path. Choose their lengths to be positive rational numbers. Intersections between perpendicular arms consist only of points and do not create graph edges. Finally, multiply all coordinates by a common positive integer that clears their denominators.
\end{proof}

\begin{figure}
\centering
  \setlength{\unitlength}{1bp}%
  \begin{picture}(439.71, 127.13)(0,0)
  \put(0,0){\includegraphics{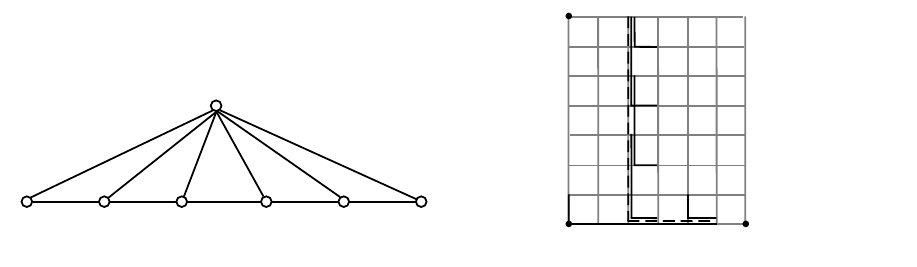}}
  \put(109.43,76.50){\fontsize{9.72}{11.66}\selectfont $v$}
  \put(5.67,20.17){\fontsize{9.72}{11.66}\selectfont $u_0$}
  \put(45.20,20.17){\fontsize{9.72}{11.66}\selectfont $u_1$}
  \put(83.08,20.17){\fontsize{9.72}{11.66}\selectfont $u_2$}
  \put(122.28,20.17){\fontsize{9.72}{11.66}\selectfont $u_3$}
  \put(158.52,20.50){\fontsize{9.72}{11.66}\selectfont $u_4$}
  \put(197.72,19.84){\fontsize{9.72}{11.66}\selectfont $u_5$}
  \put(101.04,7.76){\fontsize{9.72}{11.66}\selectfont $F_0$}
  \put(291.29,7.76){\fontsize{9.72}{11.66}\selectfont $\mathcal{R}=\langle F_0,P\rangle$}
  \put(331.84,26.57){\fontsize{9.72}{11.66}\selectfont $P_{u_0}$}
  \put(304.75,36.77){\fontsize{9.72}{11.66}\selectfont $P_{u_2}$}
  \put(314.80,47.98){\fontsize{9.72}{11.66}\selectfont $P_{u_3}$}
  \put(314.84,77.14){\fontsize{9.72}{11.66}\selectfont $P_{u_4}$}
  \put(316.21,106.05){\fontsize{9.72}{11.66}\selectfont $P_{u_5}$}
  \put(256.05,34.34){\fontsize{9.72}{11.66}\selectfont $P_{u_1}$}
  \put(256.05,12.29){\fontsize{9.72}{11.66}\selectfont (2,2)}
  \put(249.64,111.25){\fontsize{9.72}{11.66}\selectfont (2,9)}
  \put(359.20,12.29){\fontsize{9.72}{11.66}\selectfont (8,2)}
  \end{picture}%
  \caption{The graph $F_0$ and its $\llcorner$-EPG representation $\mathcal{R}$; the dashed $\llcorner$-shape in $\mathcal{R}$ is $P_v$.}
  \label{fig1}
\end{figure}

\par Accordingly, we denote by $P_v$ the path representing a vertex $v$, and let $P_v$ be
\[
 [(x_v,y_v),(r_v,y_v)]\cup[(x_v,y_v),(x_v,t_v)],\qquad r_v>x_v,\quad t_v>y_v.
\]
Its bend is $(x_v,y_v)$. Let $\mathcal{R}=\langle G,P\rangle$ be the $\llcorner$-EPG representation of $G$. We call a representation {\em non-degenerate} if every path has two arms of positive length. Moreover, in what follows, every {\em component} is understood to be a connected component, and {\em a horizontal (resp. vertical) directional component} is a connected component of $G_{\mathcal{H}}$ (resp. $G_{\mathcal{V}}$). All $\llcorner$-EPG representations below are assumed to be non-degenerate by default.

\begin{lemma}\label{le2.2}
In an $\llcorner$-EPG representation, every nonempty clique has a common grid edge. Let $G_{\mathcal{H}}$ (resp. $G_{\mathcal{V}}$) be the graphs on $V(G)$ in which two vertices are adjacent if their corresponding paths share a horizontal (resp. vertical) grid edge. If $H$ is a component of $G_{\mathcal{H}}$, then $G[H]=G_{\mathcal{H}}[H]$ and $G[H]$ is an interval graph. The analogous statements hold for every component of $G_{\mathcal{V}}$.
\end{lemma}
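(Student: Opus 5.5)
The first claim, that every nonempty clique has a common grid edge, is a Helly-type statement. I will reduce it to the one-dimensional Helly property for intervals together with a small case analysis using the $\llcorner$ shape. Let $C$ be a clique and $u,v\in C$ adjacent.

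Since $P_u$ and $P_v$ share a grid edge, that edge lies on a horizontal arm of both or on a vertical arm of both. Perpendicular arms meet in at most a point. Hence each pair in $C$ is either \emph{horizontally adjacent} (the horizontal arms lie on a common row and overlap in positive length) or \emph{vertically adjacent}.

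The key observation is that a pair cannot be both. Horizontal adjacency forces $y_u=y_v$, and vertical adjacency forces $x_u=x_v$. Together these give the same bend. Then the horizontal arms $[x_u,r_u]$ and $[x_u,r_v]$ overlap and the vertical arms overlap as well. So both kinds of adjacency can occur simultaneously, but only for paths with a common bend.

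Next I will show that one of the two types covers all of $C$. Suppose $u,v\in C$ are horizontally adjacent on row $y$, and $w\in C$ is not horizontally adjacent to some member of $C$. I claim this gives a contradiction unless the relevant bends coincide. Consider $w$ vertically adjacent to $u$ but not horizontally adjacent to $v$. Then $x_w=x_u$ and the vertical arms overlap. Now $w$ must meet $v$ either horizontally, which needs $y_w=y$, or vertically, which needs $x_w=x_v$ with $x_u=x_v$. In the first case $w$ and $u$ share a bend, so they are horizontally adjacent too. In the second case $u$ and $v$ share a bend.

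Iterating this argument, either all vertices of $C$ lie on a common row with pairwise overlapping horizontal arms, or all lie on a common column with pairwise overlapping vertical arms. Here the $\llcorner$ orientation matters: bends are the left and bottom endpoints. I expect this case analysis to be the main obstacle. It needs care when some bends coincide: then I pick the row through the common bend and check that every member reaches it horizontally.

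Once all of $C$ uses one row, the horizontal arms are pairwise intersecting intervals of positive length. By the Helly property for intervals they share a segment of positive length, namely $[\max x,\min r]$. Since endpoints are integers by Lemma~\ref{le2.1}, this segment contains a grid edge.

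For the second part, let $H$ be a component of $G_{\mathcal{H}}$. Every edge of $G_{\mathcal{H}}$ joins paths on the same row, so all paths in $H$ have their horizontal arm on one row $y$.

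To prove $G[H]=G_{\mathcal{H}}[H]$, suppose $u,v\in H$ are adjacent in $G$. If they are adjacent only vertically, then $x_u=x_v$. But $y_u=y_v=y$, so the bends coincide and the horizontal arms overlap. Hence $uv\in E(G_{\mathcal{H}})$.

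Finally, $G[H]$ is the intersection graph of the open intervals $(x_v,r_v)$ on row $y$, so it is an interval graph. The vertical case is symmetric, with the roles of rows and columns exchanged.
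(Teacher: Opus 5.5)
Your second part is correct and matches the paper's argument. All paths of a component $H$ of $G_{\mathcal{H}}$ lie on one row. A vertical-only adjacency inside $H$ forces a common bend and hence a horizontal overlap. The open intervals $(x_v,r_v)$ then represent $G[H]$. Your final step for a clique lying on one row is also fine: $\max x<\min r$ follows from pairwise positive overlap plus positive arm lengths, and integrality gives a grid edge.

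The gap is the step you flag yourself. You never prove that a clique lies entirely on one row or entirely on one column. Your case analysis only shows the following: if $u,v$ are horizontally adjacent and $w$ is vertically but not horizontally adjacent to $u$, then $u$ and $v$ share a bend. As written, your first case is vacuous, since you assumed $w$ is not horizontally adjacent to $v$. The phrase ``iterating this argument'' is never made precise.

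Your repair for coinciding bends, picking the row through the common bend, fails in exactly the situation your case analysis produces. Take $P_u=[(0,0),(2,0)]\cup[(0,0),(0,3)]$, $P_v=[(0,0),(1,0)]\cup[(0,0),(0,3)]$, and $P_w=[(0,1),(1,1)]\cup[(0,1),(0,3)]$. Then $\{u,v,w\}$ is a clique and $u,v$ are horizontally adjacent with a common bend. But $P_w$ never reaches row $0$; the common grid edge is on the column.

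The missing idea is to split on rows rather than on adjacency types. If all bends of $C$ lie on one row, you are in your horizontal case. Otherwise pick $u,v\in C$ with $y_u\ne y_v$. Their adjacency must be vertical, so $x_u=x_v$. If some $w\in C$ had $x_w\ne x_u$, then $w$ would have to be horizontally adjacent to both $u$ and $v$, giving $y_u=y_w=y_v$, a contradiction. Hence all bends of $C$ lie on one column. Two members on the same row of that column share a bend, so all vertical arms pairwise overlap in positive length. The column version of your interval argument then finishes. This short argument, which is the paper's, replaces the unspecified iteration.
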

\begin{proof}
\par Let $C$ be a nonempty clique. Suppose first that all its horizontal arms lie on one row. Any two paths in $C$ either share a horizontal edge or share a vertical edge. In the latter case their columns are equal, and their rows already are equal, so they have the same bend and also share a horizontal edge. Thus all their horizontal intervals pairwise overlap in positive length. If $p$ is the largest left endpoint and $q$ the smallest right endpoint, pairwise positive overlap gives $p<q$. Since the endpoints are integers, the intervals have a common grid edge.

\par Otherwise, choose $u,v\in C$ with $y_u\ne y_v$. Their adjacency must be vertical, so $x_u=x_v$. If some $w\in C$ had $x_w\ne x_u$, its adjacencies to both $u$ and $v$ would have to be horizontal, forcing $y_w=y_u=y_v$, a contradiction. All vertical arms of $C$ therefore lie on one column. For two of these paths on different rows, adjacency is vertical; for two on the same row, their bends are equal and their positive vertical arms overlap. The preceding endpoint argument now gives a common vertical grid edge.

\par Next let $H$ be a component of $G_{\mathcal{H}}$. Horizontal adjacency preserves the row, so all paths for vertices of $H$ lie on one horizontal row. If $P_u,P_v$ with $u,v\in H$ share a vertical edge, their columns are equal. Since their rows are also equal, they have the same bend and share a horizontal edge as well. This proves $G[H]=G_{\mathcal{H}}[H]$. Replacing each horizontal segment $[x_v,r_v]$ by the closed interval $[x_v,r_v-1/4]$ converts positive-length overlap into ordinary interval intersection. These intervals represent $G[H]$. Interchanging horizontal and vertical coordinates proves the remaining assertion.
\end{proof}

\par Let $\mathcal{H}$ and $\mathcal{V}$ be the component families of $G_{\mathcal{H}}$ and $G_{\mathcal{V}}$, including singleton components. Define the bipartite incidence graph $T_{\mathcal{R}}$ on $\mathcal{H}\mathbin{\dot\cup}\mathcal{V}$ by joining $H\in\mathcal{H}$ to $Q\in\mathcal{V}$ exactly when $H\cap Q\ne\emptyset$. For such an edge, let $C_{HQ}=H\cap Q$. Every vertex of $G$ belongs to exactly one such set. All paths in $C_{HQ}$ have the same bend, so $C_{HQ}$ is a clique. 

\par Fig. \ref{fig1} shows the graph $F_0$, obtained by joining a vertex to all vertices of the path $P_6$, together with its $\llcorner$-EPG representation. This example will serve as a running example throughout the paper. Fig. \ref{fig2} shows the incidence graph $T_{\mathcal{R}}$ of the directional components obtained from the $\llcorner$-EPG representation of $F_0$.

\begin{figure}
\centering
  \setlength{\unitlength}{1bp}%
 \begin{picture}(341.55, 108.41)(0,0)
  \put(0,0){\includegraphics{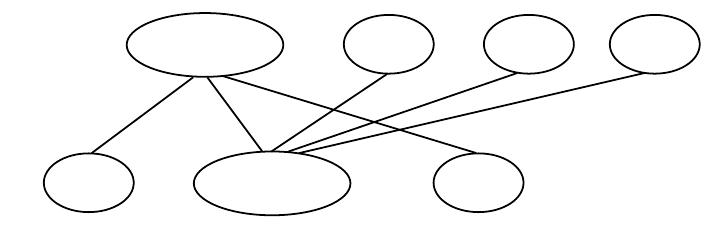}}
  \put(68.92,84.76){\fontsize{9.72}{11.66}\selectfont $u_0,u_1,u_2,v$}
  \put(100.23,17.70){\fontsize{9.72}{11.66}\selectfont $u_2,u_3,u_4,u_5,v$}
  \put(180.69,84.76){\fontsize{9.72}{11.66}\selectfont $u_3$}
  \put(247.95,84.76){\fontsize{9.72}{11.66}\selectfont $u_4$}
  \put(308.37,84.76){\fontsize{9.72}{11.66}\selectfont $u_5$}
  \put(223.78,18.31){\fontsize{9.72}{11.66}\selectfont $u_0$}
  \put(36.69,18.31){\fontsize{9.72}{11.66}\selectfont $u_1$}
  \put(44.77,85.81){\fontsize{9.72}{11.66}\selectfont $H_1$}
  \put(149.20,84.84){\fontsize{9.72}{11.66}\selectfont $H_2$}
  \put(217.31,84.20){\fontsize{9.72}{11.66}\selectfont $H_3$}
  \put(277.91,83.35){\fontsize{9.72}{11.66}\selectfont $H_4$}
  \put(79.67,17.52){\fontsize{9.72}{11.66}\selectfont $Q_1$}
  \put(193.88,17.91){\fontsize{9.72}{11.66}\selectfont $Q_2$}
  \put(5.67,18.88){\fontsize{9.72}{11.66}\selectfont $Q_3$}
  \end{picture}%
  \caption{The bipartite incidence graph $T_{\mathcal{R}}$ of the horizontal and vertical directional components, constructed from the representation $\mathcal{R}=\langle F_0,P\rangle$. The horizontal segments of $P_{u_0}$, $P_{u_1}$, $P_{u_2}$, and $P_v$ on row $y=2$ form a horizontal directional component $H_1$. On rows $y=4,6,8$, the horizontal segments of $P_{u_3}$, $P_{u_4}$, and $P_{u_5}$ form horizontal directional components $H_2$, $H_3$, and $H_4$, respectively. On column $x=4$, the vertical segments of $P_v$, $P_{u_2}$, $P_{u_3}$, $P_{u_4}$, and $P_{u_5}$ form a vertical directional component $Q_1$. On columns $x=2,6$, the vertical segments of $P_{u_1}$ and $P_{u_0}$ form vertical directional components $Q_3$ and $Q_2$, respectively.}
  \label{fig2}
\end{figure}

\begin{lemma}\label{le2.3}
If $G$ is chordal, then $T_{\mathcal{R}}$ is a forest. If $G$ is connected, then $T_{\mathcal{R}}$ is a tree.
\end{lemma}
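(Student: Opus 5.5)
We prove the two assertions separately: acyclicity from chordality, and connectedness from the connectedness of $G$.

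\emph{Acyclicity.} Suppose $T_{\mathcal{R}}$ contains a cycle. Since $T_{\mathcal{R}}$ is bipartite and simple, the cycle has the form $H_1Q_1H_2Q_2\cdots H_kQ_kH_1$ with $k\ge 2$ and all $H_i$ and $Q_i$ distinct. Each cycle edge $H_iQ_i$ (resp. $Q_iH_{i+1}$) has a nonempty clique $C_{H_iQ_i}$ (resp. $C_{H_{i+1}Q_i}$), and we pick one vertex from each. These $2k$ vertices are distinct, because every vertex lies in exactly one set $C_{HQ}$. Consecutive picked vertices lie in a common directional component.

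Inside a component $H$, the graph $G[H]=G_{\mathcal{H}}[H]$ is connected by Lemma~\ref{le2.2}. So we join consecutive picked vertices by a shortest path inside that component. This yields a closed walk $W$ in $G$ that passes through the components $H_1,Q_1,\dots,H_k,Q_k$ in cyclic order. We then extract from $W$ an induced cycle of length at least $4$, which contradicts chordality. The key geometric fact is that two vertices in different horizontal components and different vertical components cannot be adjacent. Indeed, adjacency requires sharing an edge, hence the same row and horizontal overlap, or the same column and vertical overlap, which puts them in a common component. Thus every edge of $G$ lies inside some directional component. A vertex $w$ in $C_{HQ}$ can only have neighbours in $H\cup Q$.

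So every graph edge $uw$ corresponds to staying at the tree node $\{H,Q\}$, or to moving along $T_{\mathcal{R}}$ between the "edges" $C_{HQ}$ sharing a common component. We take a shortest cycle in $G$ whose projection to $T_{\mathcal{R}}$ (through the map $w\mapsto$ the edge $HQ$ containing $w$) traverses a cycle of $T_{\mathcal{R}}$. Such a cycle exists because $W$ provides one. If this cycle had a chord, the chord would lie within a single component. It would then split the cycle into two shorter cycles, at least one of which still winds around a cycle of $T_{\mathcal{R}}$, contradicting minimality.

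The induced cycle obtained this way meets at least two horizontal and two vertical components. A triangle lies in a single component: a triangle in $G$ is a clique, so Lemma~\ref{le2.2} gives it a common grid edge, which places all three vertices in one component. The cycle therefore has length at least $4$, contradicting chordality. Hence $T_{\mathcal{R}}$ is a forest.

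\emph{Connectedness.} Every edge $uv$ of $G$ lies in a component $H$ or $Q$, which is adjacent in $T_{\mathcal{R}}$ to the nodes of both $u$ and $v$. Each vertex $w$ is incident to the tree edge between its own $H$ and $Q$. Hence a path in $G$ induces a walk in $T_{\mathcal{R}}$, and every node contains a vertex because singleton components are included. So $T_{\mathcal{R}}$ is connected when $G$ is, and it is therefore a tree.

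The main obstacle is the minimal-cycle argument: making precise that a chord splits the cycle so that one part still winds around a cycle of $T_{\mathcal{R}}$. I would handle this with homotopy and parity, counting how many times the projected walk traverses each edge of the tree cycle modulo $2$.
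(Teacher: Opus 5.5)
Your proof is correct, but it takes a different route from the paper's. The paper inducts on $|V(G)|$. It deletes a simplicial vertex $v\in H\cap Q$ and uses simpliciality together with $G[H]=G_{\mathcal{H}}[H]$ to show that no directional component splits. It then uses the common grid edge of $N[v]$ to show two cases: either $T_{\mathcal{R}}$ is unchanged (when $|C_{HQ}|\ge 2$), or $Q=\{v\}$ is a leaf (or $v$ is isolated and a $K_2$ disappears). Either way acyclicity is inherited. You instead project cycles of $G$ onto $T_{\mathcal{R}}$ and use the fact that triangles project trivially, since by Lemma~\ref{le2.2} each one lies in a single component.

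The step you flag is settled cleanly by working over $\mathbb{Z}_2$ on the subdivision of $T_{\mathcal{R}}$. Write $e(w)$ for the tree edge $HQ$ with $w\in C_{HQ}$. When $e(u)\ne e(w)$, let the graph edge $uw$ contribute the two half-edges joining the midpoints of $e(u)$ and $e(w)$ to their unique common endpoint; when $e(u)=e(w)$, let it contribute nothing. This map is linear on edge sets, and its consequences close the gaps:
\begin{itemize}
\item The chord step is just additivity.
\item Each segment of $W$ inside a component $X$ maps into the star of $X$. That star is a tree, where a chain is determined by its boundary, so $W$ maps exactly to the subdivided cycle.
\item The mod-$2$ edge set of $W$ is an edge-disjoint union of cycles, one of which must have nonzero image. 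This is what ``$W$ provides one'' has to mean.
\item A triangle maps into a star, hence to zero, which rules out length $3$.
\end{itemize}

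The paper's argument is elementary and also records how $T_{\mathcal{R}}$ changes under simplicial deletion. Yours is non-inductive, and in this linear form it proves slightly more. The induced map from the cycle space of $G$ to that of $T_{\mathcal{R}}$ is surjective and kills triangles. Hence $T_{\mathcal{R}}$ is a forest whenever the cycle space of $G$ is spanned by triangles, not only when $G$ is chordal.
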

\begin{proof}
\par We prove the forest assertion by induction on $|V(G)|$, with the empty graph as the base case. Let $v$ be a simplicial vertex of a nonempty chordal graph, and $v\in H\cap Q$.

\par Deleting $v$ does not split any remaining horizontal component. If $v$ has at most one horizontal neighbor, removing $v$ cannot split a horizontal component. If $v$ has two or more horizontal neighbors, each of them shares a horizontal grid edge with $P_v$, so they all lie in the same component $H$ of $G_{\mathcal{H}}$ as $v$. Since $v$ is simplicial, they are pairwise adjacent in $G$, and Lemma \ref{le2.2} gives $G_{\mathcal{H}}[H]=G[H]$; hence they are pairwise adjacent in $G_{\mathcal{H}}$. Any path in $G_{\mathcal{H}}$ that uses $P_v$ internally can therefore bypass $P_v$ through an edge joining its predecessor and successor. The same reasoning applies to vertical components. Thus each nonempty component after deletion is exactly an old component with $v$ removed.

\par The clique $N[v]$ has a common grid edge by Lemma \ref{le2.2}. Suppose this edge is horizontal; the vertical case follows by interchanging the two directions. Every neighbor of $v$ belongs to $H$. In particular, each vertical neighbor of $v$ belongs to $H\cap Q=C_{HQ}$.

\par If $|C_{HQ}|\geq2$, the sets $H$, $Q$, and $C_{HQ}$ remain nonempty after deletion. No directional component splits, and every other component intersection remains unchanged. Consequently, the incidence graph of the restricted representation is exactly $T_{\mathcal{R}}$. The induction hypothesis proves that it is a forest.

\par If $C_{HQ}=\{v\}$, then $v$ has no vertical neighbor. Hence $Q=\{v\}$. The node $Q$ is a leaf of $T_{\mathcal{R}}$. If $H\ne\{v\}$, deleting $v$ removes precisely this leaf and its incident edge. If $H=\{v\}$, then $v$ is isolated in $G$, and deleting it removes a $K_2$ of $T_{\mathcal{R}}$. Reattaching a leaf or restoring an isolated edge cannot create a cycle. This completes the induction.

\par Finally, suppose $G$ is connected. Adjacent vertices belong to incidence edges sharing a horizontal or a vertical component node. A graph path of $G$ therefore gives a sequence of incidence edges in one connected component of $T_{\mathcal{R}}$. Every incidence edge has a nonempty label, and every incidence node has an incident edge. Hence $T_{\mathcal{R}}$ is connected.
\end{proof}

\par The nodes of this incidence graph are directional components, not maximal cliques. Several maximal cliques may lie in one component.

\begin{definition}\label{def2.1}
An {\em ordered clique partition tree} of a nonempty graph $G$ consists of a tree $T$ with at least one edge and nonempty sets $C_e$, indexed by $e\in E(T)$, with the following properties. 
\par (1) The sets $C_e$ partition $V(G)$, and every $C_e$ is a clique.
\par (2) If $e$ and $f$ have no common endpoint in $T$, then $C_e$ and $C_f$ are non-adjacent.
\par (3) For each $t\in V(T)$, its incident edges have an order $e_1,\ldots,e_d$ such that, for every $v\in C_{e_i}$, an integer $p_t(v)\in\{i,\ldots,d\}$ satisfies
\begin{equation}\label{eq1}
  N_G(v)\cap\bigcup_{j=i+1}^{d}C_{e_j}=\bigcup_{j=i+1}^{p_t(v)}C_{e_j}.
\end{equation}
The union on the right is empty when $p_t(v)=i$.
\end{definition}

\par The cliques $C_e$ need not be maximal, and their vertices need not be twins. Different vertices in one $C_e$ may have different values of $p_t(v)$. The requirement in \eqref{eq1} is that $v$ is adjacent to every vertex of an initial segment of the later cliques $C_{e_{i+1}},\cdots,C_{e_{p_t(v)}}$, and to nothing beyond. Consider $H_1$ in $T_{\mathcal{R}}$ with $\mathcal{R}=\langle F_0,P\rangle$, the order of its neighbors is $Q_3,Q_1,Q_2$.

\begin{lemma}\label{le2.4}
Let $T$ be a bipartite tree with parts $\mathcal{H}$ and $\mathcal{V}$. Prescribe a linear order of the neighbors of each node. There is a linear order of $\mathcal{H}$ and a linear order of $\mathcal{V}$ whose restrictions realize all the prescribed neighbor orders.
\end{lemma}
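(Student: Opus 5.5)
We argue by induction on $|V(T)|$, removing a leaf each time. Order the nodes of $T$ so that every node after the first is a leaf of the subtree induced by itself and the nodes before it. A BFS order from a root $r$ works: each node other than $r$ has exactly one earlier neighbor, its parent. The task is then to insert the nodes one at a time into the growing linear orders of $\mathcal{H}$ and of $\mathcal{V}$, keeping every prescribed neighbor order correct on the nodes inserted so far.

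The base case is a single node $r$, or a single edge. Here the order of the one part containing $r$ is trivial, and every neighbor order involves at most one inserted node, so it is automatically respected.

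For the inductive step, let $u$ be the next node, say $u\in\mathcal{V}$, and let $t\in\mathcal{H}$ be its unique already-inserted neighbor. Two families of constraints involve $u$.
\begin{enumerate}
\item Constraints at $t$: the neighbors of $t$ inserted so far, together with $u$, must appear in the order prescribed at $t$.
\item Constraints at other nodes $s$: any such $s$ would have to have $u$ and some other inserted node as neighbors. Since $u$ has no inserted neighbor other than $t$, no constraint at $s\neq t$ involves $u$ yet. Later constraints at $u$ itself concern $u$'s own neighbors in $\mathcal{H}$, not $u$'s position in $\mathcal{V}$.
\end{enumerate}
So only condition 1 must be met. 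Let $a$ be the inserted neighbor of $t$ immediately preceding $u$ in $t$'s prescribed order, and $b$ the one immediately following it (either may not exist). By the inductive hypothesis $a$ precedes $b$ in the current order of $\mathcal{V}$. Insert $u$ immediately after $a$, or at the very front if $a$ does not exist. Then $u$ lies after $a$ and before $b$, because nothing lies strictly between $a$ and $u$. The order of the other inserted neighbors of $t$ is untouched, and the relative order of previously inserted nodes is unchanged, so all earlier constraints persist. The case $u\in\mathcal{H}$ is symmetric.

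The main point to verify carefully is exactly the claim that inserting $u$ affects only the constraint at its parent $t$. This is where the tree hypothesis is used: in a tree, a node added as a leaf has a single earlier neighbor, whereas a cycle would give $u$ two earlier neighbors with potentially conflicting insertion positions. I also need that constraints at $u$ itself, concerning $u$'s future neighbors in $\mathcal{H}$, are handled when those children are inserted with $u$ as their parent. The same insertion rule covers this, so after all nodes are inserted every prescribed neighbor order is realized.
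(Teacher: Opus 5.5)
Your proof is correct and follows the paper's argument: the paper removes a leaf, applies induction, and reinserts the leaf between its predecessor and successor in its parent's prescribed order, and your BFS insertion is the same induction run forwards. One small fix in your item 2: a not-yet-inserted neighbor $s$ of $u$ imposes no condition because $u$ is the unique earlier neighbor of $s$ in the BFS order, so the restricted order at $s$ has only one element; the fact you cite, that $u$ has no inserted neighbor other than $t$, does not by itself give this. The paper avoids the point by always deleting a leaf of the whole current tree.
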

\begin{proof}
Induct on the number of nodes, allowing a one-node tree as the base case. Remove a leaf $z$ with neighbor $t$, and apply induction to the remaining tree, deleting $z$ from the prescribed order at $t$. Suppose $z\in\mathcal{V}$. In the resulting order of $\mathcal{V}\setminus\{z\}$, the neighbors of $t$ occur in their prescribed relative order. Insert $z$ between its predecessor and successor in that neighbor order. If one of them does not exist, insert $z$ before the first neighbor or after the last; if neither exists, any position is suitable. No other node has $z$ as a neighbor, so this insertion violates no other prescription. The order at $z$ contains only $t$ and is automatic. The case $z\in\mathcal{H}$ is identical.
\end{proof}

\begin{theorem}\label{th2.1}
A connected graph $G$ is a chordal $\llcorner$-EPG graph if and only if it has an ordered clique partition tree. Given such a partition for $G$, one can construct an $\llcorner$-EPG representation whose endpoints have integer coordinates in $[0,2|V(G)|+1]^2$.
\end{theorem}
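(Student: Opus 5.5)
The forward direction will read an ordered clique partition tree off a representation, and the converse will build a representation from the tree. For the forward direction, let $G$ be connected, chordal and $\llcorner$-EPG, and fix a non-degenerate integer representation by Lemma \ref{le2.1}. I take $T=T_{\mathcal{R}}$, which is a tree by Lemma \ref{le2.3}, and set $C_{HQ}=H\cap Q$. Since $G$ is nonempty, $T$ has at least one edge.

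Property (1) was noted after Lemma \ref{le2.2}: the sets $C_{HQ}$ partition $V(G)$, and each is a clique because its paths share a bend. For (2), suppose $u\in H\cap Q$ and $w\in H'\cap Q'$ with $H\neq H'$ and $Q\neq Q'$. An adjacency between them would be horizontal, putting them in the same $H$, or vertical, putting them in the same $Q$; both are excluded.

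For (3) at a horizontal node $H$, all paths lie on one row, and each incident edge $HQ$ corresponds to a distinct column $x_Q$. I order the incident edges by increasing $x_Q$. Take $v\in C_{HQ_i}$ and $w\in C_{HQ_j}$ with $j>i$. They lie in different columns, so they are adjacent iff $P_w$ overlaps the horizontal arm of $v$, that is, iff $x_{Q_j}<r_v$. The later neighbors of $v$ therefore form an initial segment, and $p_H(v)$ is the largest such index, which gives \eqref{eq1}. Vertical nodes are handled symmetrically, ordering the incident edges by row.

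For the converse, let $T$ be an ordered clique partition tree with bipartition $\mathcal{H}\mathbin{\dot\cup}\mathcal{V}$, and apply Lemma \ref{le2.4} to obtain global orders of $\mathcal{H}$ and $\mathcal{V}$ that extend the local orders. Give the node of rank $k$ in $\mathcal{V}$ the column $2k$, and the node of rank $k$ in $\mathcal{H}$ the row $2k$. Since $|V(T)|=|E(T)|+1\le |V(G)|+1$, all coordinates lie in $[0,2|V(G)|+1]$ after a shift.

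Let $v\in C_e$ with $e=HQ$, where $e$ is $e_i$ in the order at $H$ and $e_{i'}$ in the order at $Q$. Put the bend of $v$ at $(\mathrm{col}(Q),\mathrm{row}(H))$. Let $r_v=\mathrm{col}(Q_{p})+1$, where $Q_p$ is the other endpoint of the $p_H(v)$-th edge at $H$. Define $t_v$ symmetrically from $p_Q(v)$.

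I then verify adjacency case by case. Two vertices in the same $C_e$ share a bend and have positive arms, so they overlap. If $u\in e_i$ and $w\in e_j$ are at a common node $H$ with $i<j$, they share a row but not a column, because $T$ is a tree. Their paths overlap iff $r_u>\mathrm{col}(e_j)$, iff $j\le p_H(u)$, iff $uw\in E(G)$ by \eqref{eq1}. The row of $H$ carries only paths of edges at $H$, so passing over foreign columns creates nothing. Vertices in non-incident edges share neither a row nor a column, so they have no overlap, which is consistent with (2). The representation is therefore an $\llcorner$-EPG representation of $G$ with the stated coordinate bound.

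The main obstacle is showing that the existence of an ordered clique partition tree forces $G$ to be chordal, since the representation itself does not obviously give this. Suppose $C$ is an induced cycle of length at least $4$. Consecutive vertices of $C$ lie in edges of $T$ that share a node, so $C$ traces a closed walk in the line graph of the tree $T$.

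I would first try to show that all vertices of $C$ lie in edges at a single node $t$. If they did not, there would be a node $t$ whose removal separates the tree, and the walk would have to pass through edges at $t$ at least twice. Two vertices of $C$ at $t$ that are non-consecutive on $C$ should then be forced adjacent by the initial-segment structure \eqref{eq1}, producing a chord; this step needs careful checking.

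Once all of $C$ sits at one node $t$, the constructed row (or column) of $t$ represents $G[\bigcup_{e\ni t}C_e]$ by positive-length interval overlaps. That subgraph is an interval graph, hence chordal, which is a contradiction. If the chord argument fails, I would instead look for a simplicial vertex for induction: take a leaf edge $e$ of $T$ at node $t$ and a vertex $v\in C_e$ that is extremal with respect to $p_t$.
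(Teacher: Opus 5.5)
Your forward direction and your explicit construction (global ranks from Lemma~\ref{le2.4}, the bend at the column of $Q$ and the row of $H$, arms ending one unit past the column or row of the $p_t(v)$-th neighbour, and the case check of overlaps) are the same as the paper's and are correct.

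\textbf{The gap: chordality in the converse.} You rightly flag this step, but you do not establish it. The theorem asserts chordality, and the representation alone does not give it. The mechanism you propose fails as stated. Condition \eqref{eq1} never forces two vertices of $S_t=\bigcup_{e\ni t}C_e$ lying in different edge-cliques to be adjacent: a vertex $u\in C_{e_1}$ with $p_t(u)=1$ has no neighbour in $C_{e_2}\cup\cdots\cup C_{e_d}$. So knowing that the cycle returns to the edges at a node $t$ yields no chord there. The fallback is not worked out either, and a vertex of a leaf edge need not be simplicial. For example, let $T$ be a star with centre $t$ and leaf edges $e_1,e_2,e_3$ carrying $\{v\},\{w\},\{x\}$, with $p_t(v)=3$ and $p_t(w)=2$. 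Then $G$ is the path $w\,v\,x$, and the leaf-edge vertex $v$ is not simplicial.

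\textbf{The missing idea.} Each edge-clique is a clique separator. This uses parts (1) and (2) of Definition~\ref{def2.1}, not the order condition.
\begin{itemize}
\item For a tree edge $e=st$, the partition gives $S_s\cap S_t=C_e$.
\item By (2), no vertex of a clique $C_f$ with $f$ in the $s$-component of $T-e$ is adjacent to a vertex of a clique $C_g$ with $g$ in the $t$-component.
\item Hence an induced cycle of length at least four with vertices strictly on both sides must meet $C_e$ on each of its two arcs between them. Those two vertices of $C_e$ are non-consecutive on the cycle yet adjacent, which is a chord.
\item Such a cycle is not contained in a clique. So you can orient every edge $e$ of $T$ toward the side containing the cycle vertices outside $C_e$.
\item A sink $t$ of this orientation has the whole cycle inside $S_t$. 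But $G[S_t]$ is an interval graph (this is where \eqref{eq1} enters), a contradiction.
\end{itemize}
The paper phrases the same fact as gluing. It roots $T$ and adds the bags $S_t$ parent-first. Each new bag meets the previous union exactly in its parent edge-clique, with no other edges across. Gluing chordal graphs along a clique preserves chordality.
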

\begin{proof}
\par Suppose first that $G$ is chordal and has an $\llcorner$-EPG representation $\mathcal{R}$. Apply Lemma \ref{le2.1} and take $T=T_{\mathcal{R}}$. Lemma \ref{le2.3} shows that $T$ is a tree. The sets $C_{HQ}=H\cap Q$ are nonempty cliques and partition $V(G)$. If vertices from two different sets are adjacent, they share a horizontal or vertical component, so the corresponding tree edges have a common endpoint. This proves the first two partition conditions.

\par Fix $H\in\mathcal{H}$. Write its adjacent vertical components as $Q_1,\ldots,Q_d$ in increasing order of their column coordinates $x_1,\ldots,x_d$. These coordinates are distinct. Otherwise, paths from $H\cap Q_i$ and $H\cap Q_j$ would have the same bend and would share a vertical edge, forcing $Q_i=Q_j$. Put $e_i=HQ_i$. For $u\in C_{e_i}$, $v\in C_{e_j}$, and $i<j$,
\[
uv\in E(G)\iff r_u>x_j.
\]
The condition does not depend on the choice of $v$ within $C_{e_j}$. Since the $x_j$ increase strictly, the later cliques adjacent to $u$ are precisely an initial segment. This gives \eqref{eq1}. Ordering the horizontal components at a vertical node from bottom to top proves the corresponding assertion there.

\par Conversely, suppose that an ordered clique partition tree is given. We first prove chordality. For $t\in V(T)$, set 
\[
S_t=\bigcup_{e\ni t}C_e,
\]
and use the incident order $e_1,\ldots,e_d$ at $t$. Assign to $v\in C_{e_i}$ the interval 
\[
I_v^t=[2i,2p_t(v)+1].
\]
Intervals belonging to the same $C_{e_i}$ intersect. If $u\in C_{e_i}$, $v\in C_{e_j}$, and $i<j$, they intersect exactly when $j\leq  p_t(u)$, which is equivalent to $uv\in E(G)$ by \eqref{eq1}. Thus these intervals represent $G[S_t]$.

\par Root $T$ and add its sets $S_t$ in any order in which parents precede children. When a non-root node $t$ is added, its intersection with the previously added vertex set is exactly the clique indexed by its parent edge. The second partition condition excludes edges between the two sides outside this intersection. To verify that gluing preserves chordality, suppose an induced cycle of length at least four meets both sides of a clique separator. It must contain at least two separator vertices. With three or more, two are nonconsecutive on the cycle and give a chord. With exactly two, both sides of the cycle between them contain vertices outside the separator, so their joining edge is again a chord. A cycle contained in one side already has a chord. Since every $G[S_t]$ is an interval graph, and hence chordal, the resulting graph $G$ is chordal.

\par We now construct its $\llcorner$-EPG representation. Choose a bipartition $(\mathcal{H},\mathcal{V})$ of $T$. Interpret $\mathcal{H}$ as row nodes and $\mathcal{V}$ as column nodes. By Lemma \ref{le2.4}, give the column nodes distinct ranks $x(Q)\in\{1,\ldots,|\mathcal{V}|\}$ and the row nodes distinct ranks $y(H)\in\{1,\ldots,|\mathcal{H}|\}$ that respect every prescribed neighbor order.

\par Let $v\in C_{HQ}$. Let the neighbors at $H$ be ordered as $Q_1,\ldots,Q_d$, and at $Q$ write them as $H_1,\ldots,H_s$. Put $k=p_H(v)$ and $\ell=p_Q(v)$. Define
\begin{equation}\label{eq2}
 P_v=[(2x(Q),2y(H)),(2x(Q_k)+1,2y(H))]\cup [(2x(Q),2y(H)),(2x(Q),2y(H_\ell)+1)].
\end{equation}
The lower bound on each prefix index ensures that both arms have length at least one. Thus every bend is a lower-left corner.

\par Two vertices of the same $C_{HQ}$ have the same bend and share the first grid edge of both arms. Next let $u\in C_{HQ_i}$ and $v\in C_{HQ_j}$ with $i<j$. They lie on the same row and on distinct columns, so only a horizontal overlap can produce adjacency. The later arm starts at $2x(Q_j)$, and $P_u$, $P_v$ share a horizontal edge if and only if
\[
2x(Q_j)<2x(Q_{p_H(u)})+1\iff j\leq p_H(u)\iff uv\in E(G).
\]
The vertical calculation is the same, with $x,Q,H$ replaced by $y,H,Q$. If two tree edges have no common endpoint, their paths have distinct rows and distinct columns. They may cross at a point, but they cannot share a grid edge. These cases verify every adjacency and non-adjacency.

\par Finally, the nonempty partition sets imply $|E(T)|\leq |V(G)|$. Each part of a bipartition of a tree with at least one edge has at most $|E(T)|$ nodes. Therefore every coordinate in \eqref{eq2} is at most $2|V(G)|+1$ and is a positive integer.
\end{proof}

\par For disconnected graphs, apply the characterization to each nonempty component and place their representations in disjoint sets of rows and columns. Thus restricting the structural statements to connected nonempty graphs loses no membership cases.

\section{Admissible partial orientations of clique trees}

\par The characterization of Section 2 uses an incidence tree whose nodes are directional components. We now develop a second characterization that works directly with maximal cliques, using a partial orientation of a clique tree. A partial orientation of a tree assigns a direction to some edges and leaves the others undirected. An edge cannot receive both directions. Only directed edges contribute to the in-degree and out-degree. Recall that for a clique tree edge $KQ$, we call $S_{KQ}=K\cap Q$ its separator.

\begin{definition}\label{def3.1}
A partial orientation of a clique tree $\mathcal{T}$ is {\em admissible} if, for every node $K$, its out-degree and in-degree satisfy the following conditions, respectively:
\begin{equation}\label{eq3}
 d^-_{\mathcal{T}}(K)\leq 1,\qquad d^+_{\mathcal{T}}(K)\leq 1,
\end{equation}
and, for any distinct neighbors $Q,R$ of $K$,
\begin{equation}\label{eq4}
 S_{KQ}\cap S_{KR}\ne\emptyset\Longrightarrow K\to Q\text{ or }K\to R.
\end{equation}
The same definition applies to a tree with clique labels $B_u$ at its nodes: the separator on $uv$ is then $B_u\cap B_v$.
\end{definition}

\par We now give one clique tree of a chordal $\llcorner$-EPG graph $K_{1,4}$ that admits no admissible partial orientation, and another that does admit such an orientation.
\par Let the star $K_{1,4}$ have center $v$ and leaves $u_i$ ($i\in[4]$). For $i\in[4]$, let $D^i=\{v,u_i\}$ be a maximal clique of $K_{1,4}$. In the clique tree of Fig. \ref{fig3}(b), for every pair $i\neq j$ in $\{2,3,4\}$, the intersection $S_{D^1D^i}\cap S_{D^1D^j}=\{v\}$ forces $D^1\to D^i$, $D^1\to D^j$ by \eqref{eq4}. A pigeonhole argument over the three pairs shows that at least two of $D^1\to D^2$, $D^1\to D^3$, and $D^1\to D^4$ must hold, contradicting $d^-_{\mathcal{T}}(D^1)\leq 1$. Hence this clique tree does not admit an admissible partial orientation. In the clique tree of Fig. \ref{fig3}(c), the equalities $S_{D^2D^1}\cap S_{D^2D^3}=\{v\}$, and $S_{D^3D^2}\cap S_{D^3D^4}=\{v\}$ imply that at least one of $D^2\to D^1$ and $D^2\to D^3$ holds, and at least one of $D^3\to D^2$ and $D^3\to D^4$ holds. We may therefore set $D^3\to D^2\to D^1$, and this clique tree admits an admissible partial orientation.

\begin{figure}
\centering
  \setlength{\unitlength}{1bp}%
   \begin{picture}(389.66, 94.77)(0,0)
  \put(0,0){\includegraphics{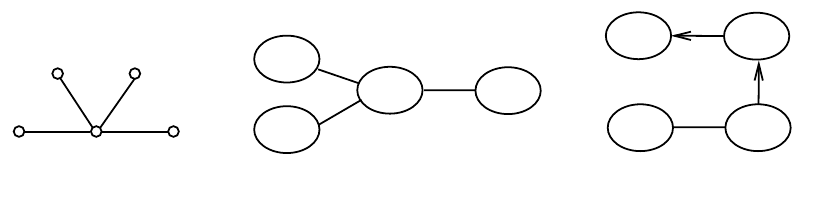}}
  \put(24.56,64.02){\fontsize{9.72}{11.66}\selectfont $u_3$}
  \put(62.58,64.71){\fontsize{9.72}{11.66}\selectfont $u_2$}
  \put(43.27,20.09){\fontsize{9.72}{11.66}\selectfont $v$}
  \put(80.13,20.05){\fontsize{9.72}{11.66}\selectfont $u_1$}
  \put(5.67,20.44){\fontsize{9.72}{11.66}\selectfont $u_4$}
  \put(130.77,64.00){\fontsize{9.72}{11.66}\selectfont $D^2$}
  \put(130.70,29.51){\fontsize{9.72}{11.66}\selectfont $D^4$}
  \put(181.32,49.07){\fontsize{9.72}{11.66}\selectfont $D^1$}
  \put(237.97,48.51){\fontsize{9.72}{11.66}\selectfont $D^3$}
  \put(300.63,75.22){\fontsize{9.72}{11.66}\selectfont $D^1$}
  \put(357.28,74.66){\fontsize{9.72}{11.66}\selectfont $D^2$}
  \put(301.46,30.72){\fontsize{9.72}{11.66}\selectfont $D^4$}
  \put(357.94,30.72){\fontsize{9.72}{11.66}\selectfont $D^3$}
  \put(42.00,7.76){\fontsize{9.72}{11.66}\selectfont (a)}
  \put(182.97,7.76){\fontsize{9.72}{11.66}\selectfont (b)}
  \put(330.35,7.76){\fontsize{9.72}{11.66}\selectfont (c)}
  \end{picture}%
\caption{Let $D^i=\{v,u_i\}$. (b) A clique tree of $K_{1,4}$ does not admit an admissible partial orientation. (c) A clique tree of $K_{1,4}$ admits an admissible partial orientation.}
\label{fig3}
\end{figure}

\par The degree condition alone ensures that the directed edges form internally vertex-disjoint directed paths. The separator condition records which end of a directed path can meet a change of direction. Three preliminary lemmas will allow us to retain this information when replacing local cliques by maximal cliques.

\par The next three lemmas are used, respectively, to unify the ordering of local maximal cliques, to reduce local clique labels to maximal cliques, and to characterize the relationship between the vertex support tree and the orientation.

\begin{lemma}\label{le3.1}
Let an interval graph with a fixed interval representation. Order its maximal cliques by increasing representative points on the line. If a nonempty family of representing intervals has a common left endpoint, all vertices in that family have the same first maximal clique in this order.
\end{lemma}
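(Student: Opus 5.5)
We first make precise what the order of maximal cliques is. Fix an interval representation $\{I_v=[a_v,b_v]\}$ of the interval graph. By the Helly property for intervals, every maximal clique $K$ has a nonempty common intersection $\bigcap_{v\in K}I_v$. We choose a representative point $c_K$ in it; a convenient choice is $c_K=\max_{v\in K}a_v$. Maximality of $K$ means that $K=\{v: c_K\in I_v\}$. Indeed, the set on the right is a clique, because all its intervals contain $c_K$, and it contains $K$. Consequently distinct maximal cliques have distinct representative points, and they are linearly ordered by increasing $c_K$. This is the Fulkerson--Gross consecutive order recalled in the introduction. For a vertex $v$, the cliques containing $v$ are exactly those with $c_K\in I_v$, and they form a consecutive block. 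Its first member is the clique with the smallest representative point $c_K\ge a_v$.

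The key step is to show that this first clique depends only on $a_v$. Let $K_v$ be the first maximal clique containing $v$. We claim that $K_v$ is the clique $K$ with the smallest $c_K\ge a_v$, over all maximal cliques and not only those containing $v$.

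\begin{itemize}
\item Suppose some maximal clique $K$ has $a_v\le c_K$ and $c_K<c_{K_v}$. Since $c_{K_v}\le b_v$, we get $c_K\in I_v$, so $v\in K$. This contradicts the minimality of $K_v$.
\item Such a clique exists. The point $a_v$ lies in $I_v$, and the clique $\{u: a_v\in I_u\}$ extends to some maximal clique $K$ containing $v$. With the choice $c_K=\max a_u$ we have $c_K\ge a_v$. Hence the minimum over all maximal cliques with $c_K\ge a_v$ exists and belongs to the set of cliques containing $v$.
\end{itemize}

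So $K_v=\operatorname{argmin}\{c_K : K\in\mathcal{K},\ c_K\ge a_v\}$. This expression depends on $v$ only through $a_v$. Therefore all vertices in a family with a common left endpoint $a$ have the same first maximal clique, namely the first clique whose representative point is at least $a$.

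The main obstacle is the phrase ``representative points'' in the statement. If representatives are allowed to be arbitrary points of $\bigcap_{v\in K}I_v$, the argument above still works unchanged. It uses only that $c_K\in\bigcap_{u\in K}I_u$, that $K=\{u: c_K\in I_u\}$, and that $c_K\ge a_u$ for every $u\in K$; the last holds for any point of the intersection. One subtle point is a tie in which two cliques share a representative point. This cannot happen, because the clique is recovered from its point as $\{u: c_K\in I_u\}$. With this noted, the ordering is well defined and the proof is complete.
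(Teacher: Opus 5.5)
Your proof is correct and is essentially the paper's argument. Both rest on one observation: a representative point lying between $a_v$ and a point of $I_v$ must itself lie in $I_v$. The paper applies it by contradiction to the earliest clique meeting the family, whereas you turn it into an explicit description of $K_v$ as the maximal clique with the smallest $c_K\ge a_v$, which shows slightly more, namely that the first clique depends only on the left endpoint. Your existence bullet can also be shortened, since $c_{K_v}\in I_v$ already gives $c_{K_v}\ge a_v$.
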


\begin{proof}
\par The Helly property for intervals gives a common point for each maximal clique. Maximality ensures that the intervals containing that point are exactly the intervals of the clique. Choose one representative point for each maximal clique. Distinct maximal cliques have distinct representative points, and ordering these points makes the cliques containing each vertex consecutive.

\par Let the common left endpoint of the given family be $x_0$, and let $K^i$ be the earliest maximal clique containing any vertex of the family. Its representative point $z_i$ satisfies $z_i\geq x_0$. Suppose an interval $I_w$ in the family does not contain $z_i$. Its right endpoint is then smaller than $z_i$. A maximal clique containing $w$ has a representative point $z_j\in I_w$, so $x_0\leq  z_j<z_i$. This clique precedes $K^i$ and contains a member of the family, contradicting the definition of $K^i$. Therefore every member first occurs in $K^i$.
\end{proof}

\begin{lemma}\label{le3.2}
Let $G$ be connected and nonempty. Suppose a tree $\mathcal{U}$ has nonempty clique labels $B_u$ with the following properties: every graph vertex and edge belongs to a label; the labels containing any fixed vertex induce a connected subtree; every tree-edge separator is nonempty; and $\mathcal{U}$ has an admissible partial orientation. Suppose also that the two labels on each directed edge are incompatible under inclusion, i.e. neither $B_u\subseteq B_v$ nor $B_v\subseteq B_u$, and that every maximal clique of $G$ occurs as a label. Then $\mathcal{U}$ can be reduced to a clique tree of $G$ with an admissible partial orientation.
\end{lemma}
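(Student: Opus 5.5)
We reduce $\mathcal{U}$ by repeatedly contracting tree edges $uv$ with $B_u\subseteq B_v$: delete $u$ and reattach every other neighbor $w$ of $u$ to $v$, keeping label $B_v$. We induct on the number of nodes. Once no label is contained in an adjacent label, every label is a maximal clique. Indeed, if $B_u\subsetneq K$ for a maximal clique $K=B_z$, then by running intersection every vertex of $B_u$ lies in every label on the tree path from $u$ to $z$. So $B_u$ is contained in the next label on that path, contradicting termination. The labels are also distinct: equal labels at distinct nodes $u,z$ force $B_u\subseteq$ the neighbor of $u$ on the path to $z$. Since every maximal clique occurs as a label, the final tree has node set $\mathcal{K}(G)$ and the running-intersection property, so it is a clique tree. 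The hypotheses on covering vertices and edges and on nonempty separators are preserved by each contraction.

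\textbf{Step 1: structural properties survive.} When $u$ is merged into $v$ with $B_u\subseteq B_v$, the new edge $vw$ has separator $B_v\cap B_w\supseteq B_u\cap B_w\ne\emptyset$. The subtree of nodes containing a vertex $x$ stays connected: if $x\in B_u$ then $x\in B_v$, so removing $u$ only shortcuts paths through $v$; if $x\notin B_u$, no path for $x$ used $u$. In fact $B_v\cap B_w=B_u\cap B_w$ whenever $w\ne v$ is a neighbor of $u$. This holds because any $x\in B_v\cap B_w$ lies on the path $v,u,w$ and hence in $B_u$. So every separator is unchanged, which is the key fact.

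\textbf{Step 2: transport the orientation.} The edge $uv$ is undirected, since directed edges have incompatible labels. Each edge $uw$ becomes $vw$ with the same separator, and we give it the direction $uw$ had, replacing $u$ by $v$. Condition \eqref{eq4} at nodes $w\ne v$ is unchanged, because their separators and orientations are unchanged. At $v$ we must check \eqref{eq3} and \eqref{eq4} for pairs among old neighbors of $v$ and of $u$. For two old neighbors $w,w'$ of $u$, \eqref{eq4} at $u$ transfers verbatim. For a mixed pair, $w$ a neighbor of $u$ and $R$ a neighbor of $v$, we have $S_{vw}\cap S_{vR}=B_u\cap B_w\cap B_R\subseteq S_{uw}\cap S_{uv}$. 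If this is nonempty, \eqref{eq4} at $u$ for the pair $(v,w)$ forces $u\to w$, because $u\to v$ is impossible. So the transferred edge is $v\to w$, and \eqref{eq4} holds.

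\textbf{Main obstacle: the degree bound \eqref{eq3} at $v$.} The difficulty is that $u$ and $v$ may both contribute an out-edge, or both an in-edge. If $u$ has an out-edge $u\to w$, I would apply \eqref{eq4} at $u$ to the pair $(v,w)$, and at $v$ to the pair $(u,R)$ for its own directed edge $v\to R$. Then I would use $S_{uv}=B_u$, which meets every separator at $u$, to derive a contradiction. Concretely, $S_{uv}\cap S_{uw}=B_u\cap B_w\ne\emptyset$ is automatic, and by Step 1 this only forces $u\to w$, which already holds. The conflict with $v$'s own out-edge is therefore not excluded automatically.

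If a genuine conflict arises, I would use the freedom in the rerouting. Instead of attaching the neighbors of $u$ to $v$, I would attach them to a suitable node, or choose which containment edge to contract first, for instance a leaf-most one. Another option is to undirect an edge whose separator meets no other separator at $v$, since such an edge is never forced by \eqref{eq4}. I expect this case analysis, which uses $S_{uv}=B_u$ together with the in/out bound at $u$, to be where the real work lies.
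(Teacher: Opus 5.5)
\textbf{There is a genuine gap, and it sits at the degree bound you flagged.} Your framework matches the paper's: contract an edge $uv$ with $B_u\subseteq B_v$, use $B_v\cap B_w=B_u\cap B_w$ so that separators survive, and stop when the labels are exactly the maximal cliques. Step~1 and your check of \eqref{eq4} at $v$ are correct. The problem is that giving $vw$ the direction of $uw$ can genuinely violate \eqref{eq3}, and none of your proposed remedies repairs this.

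Here is a counterexample to the transfer rule. Let $G$ have edges $xx',x'z,x'y,yy'$, and let $\mathcal{U}$ be the path $b\leftarrow u - v\to c\to d$ with labels
$B_b=\{x',z\}$, $B_u=B_v=\{x,x'\}$, $B_c=\{x',y\}$, $B_d=\{y,y'\}$.
\begin{itemize}
\item All hypotheses of the lemma hold. The only intersecting separator pairs are at $u$ and at $v$, and they are covered by $u\to b$ and $v\to c$.
\item The edge $uv$ is the only containment edge, so your procedure must contract it.
\item Contracting it in either direction leaves a node with outgoing edges to both $b$ and $c$.
\end{itemize}
Consequently, reordering the contractions cannot help. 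Your criterion for undirecting an edge (its separator meets no other separator at $v$) also does not apply, since $S_{vb}=S_{vc}=\{x'\}$.

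\textbf{The missing step is to apply \eqref{eq4} at $u$ to every other edge there, not just one.}
\begin{itemize}
\item Since $uv$ is undirected and $S_{uv}=B_u$ contains every nonempty separator $S_{uw}$, each other edge at $u$ must be $u\to w$.
\item By the out-degree bound, $u$ is therefore either a leaf or has exactly one further neighbor $w$, with $u\to w$. You noticed that this pair forces $u\to w$, but did not draw this consequence.
\end{itemize}

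\textbf{The fix is to replace $v-u\to w$ by an \emph{undirected} edge $vw$.} This is exactly what the paper does.
\begin{itemize}
\item No directed degree increases, and no new directed edge appears. Hence incompatibility on directed edges persists automatically.
\item At $v$, condition \eqref{eq4} holds because $S_{vw}=B_u\cap B_w\subseteq B_u=S_{vu}$ and $vu$ was undirected. So any $R$ with $S_{vw}\cap S_{vR}\neq\emptyset$ already had $v\to R$ forced.
\item At $w$, an incoming edge is replaced by an undirected edge with the same separator. Neither is outgoing from $w$, so its covering requirements are unchanged.
\end{itemize}
In the example above, this yields the clique tree $b - v\to c\to d$.

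\textbf{A smaller point.} Your termination argument needs that every maximal clique still occurs as a label after each contraction, and you should record why. If $B_u$ is a maximal clique, then the clique $B_v\supseteq B_u$ equals it, so the label survives at $v$.
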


\begin{proof}
Assume that a label $B_u$ is contained in another label $B_w$, with $u\ne w$. Let $a$ be the neighbor of $u$ on the $u$--$w$ path. Running intersection implies $B_u\subseteq B_a$. Since labels on directed edges are incompatible, the edge $ua$ is undirected, and its separator is $B_u$.

\par For any other neighbor $b$ of $u$, since the nonempty separator $S_{bu}=B_u\cap B_b\subseteq B_u=S_{ua}$, we have $S_{bu}\cap S_{ua}\ne\emptyset$ and $u\to b$ by \eqref{eq4}. The out-degree bound permits at most one such neighbor. Thus $u$ is either a leaf or a node of degree two, with its other edge directed away from $u$.

\par If $u$ is a leaf, delete it. If $u$ has degree two with the other neighbor $b$, replace the path $a-u\to b$ by an undirected edge $ab$, and delete $u$. Since $\mathcal{U}$ is a tree, there was no edge $ab$ before this operation. The new graph is again a tree. Running intersection and $B_u\subseteq B_a$ give
\begin{equation}\label{eq5}
 B_a\cap B_b=B_u\cap B_b.
\end{equation}

\par We check each invariant. Since $B_a$ contains $B_u$, deleting $B_u$ loses no vertex or graph edge. Running intersection is preserved: an occurrence of a vertex at $u$ can be replaced by its occurrence at $a$, and \eqref{eq5} gives the required intersection across the new edge. Since $S_{ub}$ was nonempty, the new separator is nonempty.

\par At $a$, the old undirected edge had separator $B_u$, and the new undirected edge has the subset $B_u\cap B_b$. Thus any new pair of intersecting separators already intersected before the operation. At $b$, an incoming edge is replaced by an undirected edge with exactly the same separator. Neither edge is outgoing from $b$, so every covering requirement at $b$ remains unchanged. Directed degrees cannot increase. No new directed edge is introduced, so the incomparability condition on directed edges also persists.

\par Repeat until no label is contained in another label at a distinct node. Each operation removes a node, so the process terminates. Every maximal clique retains at least one occurrence: if its label is deleted, a containing label must equal it. Every remaining nonmaximal clique label would be contained in a retained maximal clique label, which is impossible at termination. Equal repeated labels are impossible for the same reason. Hence the remaining labels are exactly the distinct maximal cliques of $G$, and running intersection makes the remaining tree a clique tree. Its partial orientation is admissible.
\end{proof}

\par Before presenting the next lemma, we introduce a definition. A {\em directed component} is a directed path $K^1\to \cdots \to K^r$.

\begin{lemma}\label{le3.3}
Let $\mathcal{T}$ be a clique tree with an admissible partial orientation. For every graph vertex $v$, the subtree $\mathcal{T}_v$ is a path and contains at most one undirected edge. If it contains an undirected edge $KQ$, its portions in the directed components through $K$ and $Q$ start at these two nodes and follow the directions of those components.
\end{lemma}

\begin{proof}
The subtree $\mathcal{T}_v$ is connected by running intersection. If a node $K$ had three neighbors in $\mathcal{T}_v$, then any two of the corresponding separators would contain $v$. By \eqref{eq4}, every pair of these edges would include an edge directed away from $K$. However, at most one of the three edges can be outgoing. The other two violate \eqref{eq4}. Thus every node of $\mathcal{T}_v$ has degree at most two, so $\mathcal{T}_v$ is a path.

\par Two undirected edges cannot be consecutive on this path, since their separators would intersect at $v$ and neither edge would be outgoing at their common node. Suppose there are two undirected edges, and choose two such that no other undirected edge lies between them on $\mathcal{T}_v$. The nonempty directed portion between them consists of directed edges. At each internal node of this directed portion, \eqref{eq3} excludes both two incoming edges and two outgoing edges, so the whole directed portion has a consistent direction. At the endpoint adjacent to one undirected edge, \eqref{eq4} requires the adjacent directed edge to point into the directed portion. At the endpoint adjacent to the other undirected edge, the same condition requires the opposite direction. This is impossible.

\par If $KQ$ is that edge and $\mathcal{T}_v$ continues at $K$, its next edge must be outgoing from $K$ by \eqref{eq4}. The degree conditions force the subsequent directed edges to continue in the same direction. No incoming edge at $K$ can belong to $\mathcal{T}_v$. The same argument applies at $Q$, proving the final assertion.
\end{proof}

\begin{theorem}\label{th3.1}
A connected nonempty chordal graph $G$ is an $\llcorner$-EPG graph if and only if it has a clique tree with an admissible partial orientation.
\end{theorem}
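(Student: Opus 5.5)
The plan is to pass through Theorem \ref{th2.1} in both directions. An ordered clique partition tree will be converted into a clique-labelled tree to which Lemma \ref{le3.2} applies. Conversely, an admissible partial orientation will be converted into an ordered clique partition tree, using Lemma \ref{le3.3} to control the subtrees $\mathcal{T}_v$.

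\emph{Necessity.} Take a non-degenerate representation $\mathcal{R}$ (Lemma \ref{le2.1}) and the tree $T_{\mathcal{R}}$ (Lemma \ref{le2.3}). Each directional component $H$ induces an interval graph by Lemma \ref{le2.2}. Order its maximal cliques (the \emph{local cliques}) left to right for rows and bottom to top for columns, and join consecutive local cliques by edges directed forward. Every vertex of $C_{HQ}$ has the same bend, hence the same left endpoint in $H$ and the same lower endpoint in $Q$. Lemma \ref{le3.1} therefore gives a well-defined first local clique $K_H(HQ)$ of $H$ containing $C_{HQ}$, and similarly $K_Q(HQ)$. For every edge $HQ$ of $T_{\mathcal{R}}$, add an undirected edge $K_H(HQ)K_Q(HQ)$. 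The result $\mathcal{U}$ is a tree, because it is $T_{\mathcal{R}}$ with each node blown up into a path.

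Next I check the hypotheses of Lemma \ref{le3.2}.
\begin{itemize}
\item Every maximal clique of $G$ has a common grid edge (Lemma \ref{le2.2}), so it lies in one component and appears as a local clique.
\item Consecutive distinct local cliques are incomparable, so the labels on each directed edge are incompatible.
\item A vertex $v\in C_{HQ}$ occupies a consecutive run of the $H$-path starting at $K_H(HQ)$ and a run of the $Q$-path starting at $K_Q(HQ)$. These runs are joined by the undirected edge, which gives running intersection and nonempty separators.
\item For admissibility, each node has at most one incoming and one outgoing edge. Two undirected edges at $K$ toward $Q\ne Q'$ have separators inside $C_{HQ}$ and $C_{HQ'}$, which are disjoint. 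An undirected separator at $K$ misses the predecessor of $K$, because its vertices start at $K$. So every intersecting pair of separators at $K$ contains the outgoing edge, which is \eqref{eq4}.
\end{itemize}
Lemma \ref{le3.2} then yields a clique tree with an admissible partial orientation.

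\emph{Sufficiency.} Let $\mathcal{T}$ carry an admissible partial orientation. By \eqref{eq3}, the maximal directed components are vertex-disjoint directed paths covering all nodes, possibly as single nodes. By Lemma \ref{le3.3}, each $\mathcal{T}_v$ is a path with at most one undirected edge, and every directed portion starts at its first node in the direction of its component. Call that node $s(v)$ when $\mathcal{T}_v$ has no undirected edge. Build $T$ as follows.
\begin{itemize}
\item Its nodes are the directed components, together with one new leaf $\lambda_K$ for each clique node $K$.
\item Its edges are the undirected clique tree edges $KQ$, joining the components of $K$ and $Q$, with $C_{KQ}=\{v:KQ\in\mathcal{T}_v\}$.
\item Each pendant edge $D\lambda_K$ gets $C=\{v: \mathcal{T}_v\text{ has no undirected edge},\ s(v)=K\}$.
\end{itemize}
Edges with empty labels are discarded.

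$T$ is a tree, since contracting the directed paths of a tree gives a tree. The sets $C_e$ partition $V(G)$ by Lemma \ref{le3.3}, and each is a clique, because all its members contain $K$. Condition (2) holds: adjacent $u,v$ share a maximal clique $K$, and both of their $T$-edges are incident with the component of $K$. For condition (3) at a component $D=K^1\to\cdots\to K^r$, order the incident edges by the index of the node at which their vertices start, breaking ties arbitrarily. Each such vertex $v$ occupies $K^{a}\ldots K^{b}$ on $D$ with $a$ its start. A later-ordered vertex $u$, with start $a'\ge a$, is adjacent to $v$ exactly when $a'\le b$. Hence the later neighbours of $v$ form an initial segment, which gives \eqref{eq1}. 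Leaves need no order condition. Theorem \ref{th2.1} then finishes.

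The main obstacle is the sufficiency bookkeeping. The delicate points are to confirm via Lemma \ref{le3.3} that every vertex on an undirected edge $KQ$ really starts at $K$ on its side, and that ties at a common start node cause no violation of \eqref{eq1}. The latter holds because such vertices are pairwise adjacent and their adjacency to later cliques depends only on $b$.
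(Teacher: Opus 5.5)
Your proposal is correct and follows essentially the paper's proof. For necessity you build the same blown-up tree $\mathcal{U}$ from $T_{\mathcal{R}}$ and invoke Lemma \ref{le3.2}; for sufficiency you contract the directed paths, label undirected edges by their separators, attach private leaves for vertices supported inside one directed component, and invoke Theorem \ref{th2.1}. Two points are stated too quickly, though both are routine and the paper spells them out: separators on the directed edges of $\mathcal{U}$ are nonempty because each $G[H]$ is connected, not because of the consecutive runs of a single vertex; and vertices from distinct edge-cliques at $D$ can share a maximal clique only inside $D$, because distinct edges of the tree $T$ share at most one endpoint.
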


\begin{proof}
\par Suppose first that $G$ is an $\llcorner$-EPG graph. Its directional component-incidence graph $T_{\mathcal{R}}$ is a tree by Lemma \ref{le2.3}. Each directional component induces an interval graph by Lemma \ref{le2.2}. In every horizontal component, order its local maximal cliques from left to right and join consecutive ones into a directed path. In every vertical component do the same from bottom to top. These paths may consist of one node. We keep distinct nodes when the same clique label occurs in two components.

\par Consider an incidence edge $HQ$ and its clique $C_{HQ}=H\cap Q$. On the row of $H$, all vertices in $C_{HQ}$ have the same left endpoint. On the column of $Q$, all have the same bottom endpoint. By Lemma \ref{le3.1}, these vertices have a common first local maximal clique in each component. Join these two first-clique nodes by an undirected edge. Their intersection is exactly $C_{HQ}$: it contains this set by construction and is contained in $H\cap Q$.

\par Call the resulting clique-labeled graph $\mathcal{U}$. Contracting its directed paths gives $T_{\mathcal{R}}$. Each directed path replaces one node of $T_{\mathcal{R}}$ and is itself a tree; the undirected edges of $\mathcal{U}$ correspond bijectively to the edges of $T_{\mathcal{R}}$. Replacing nodes of a tree by trees, with incident edges attached to the appropriate replacement nodes, preserves the tree property. Hence $\mathcal{U}$ is a tree. Every edge of $E(G)$ belongs to a local maximal clique in a horizontal or vertical component, and every vertex of $V(G)$ belongs to such a clique. For a fixed vertex $v\in C_{HQ}$, its local cliques are consecutive on the path for $H$ and consecutive on the path for $Q$. The undirected edge between their first cliques joins these two portions. No other component contains $v$, so running intersection holds.

\par Every separator of $\mathcal{U}$ is nonempty. This is immediate for undirected edges. For consecutive local maximal cliques it follows from connectedness of the local interval graph: if their intersection were empty, no representing interval could occur on both sides of that position, and the component would be disconnected. Since the labels on each directed edge are distinct maximal cliques of the same induced interval graph, labels on each directed edge are incompatible.

\par We next check admissibility. Each node has at most one incoming and one outgoing directed edge on its local clique path. An outgoing edge covers every pair of incident separators that includes it. The separator of an incoming edge is disjoint from each incident undirected-edge separator: a vertex in the latter has its first local clique at this node and cannot occur in the preceding clique. Since the sets $C_e$ partition $V(G)$, two distinct undirected-edge separators are disjoint. These observations cover all pairs of incident edges and prove \eqref{eq4}.

\par Finally, every maximal clique of $G$ occurs as a label of $\mathcal{U}$. Lemma \ref{le2.2} gives it a common horizontal or vertical grid edge, so it lies within one directional component. It is also maximal in that induced interval graph; otherwise a larger local clique would contradict global maximality. Lemma \ref{le3.2} now converts $\mathcal{U}$ into a clique tree of $G$ with an admissible partial orientation.

\par Conversely, suppose that $\mathcal{T}$ has an admissible partial orientation. Consider the subgraph of $\mathcal{T}$ consisting only of its directed edges. By the degree condition \eqref{eq3}, every directed component of this subgraph is a directed path $K^1 \to\cdots\to K^r$ (possibly one node). Now contract each such directed path to a node. All undirected edges of $\mathcal{T}$ that were incident to nodes of the path become incident to the new node, and thus become the edges of the resulting tree $T_0$.

\par For each undirected edge $e=KQ$ of $\mathcal{T}$, assign the clique $C_e=K\cap Q$ to the corresponding edge of $T_0$. These cliques are nonempty and pairwise disjoint. Indeed, if a vertex belonged to two of them, its support $\mathcal{T}_v$ would contain two undirected edges, contrary to Lemma \ref{le3.3}. That lemma also shows that, on each of the two directed components incident with $e$, all vertices of $C_e$ have their first occurrence at the endpoint of $e$.

\par Every vertex not yet assigned to a clique $C_e$ has its support entirely within a directed component by Lemma \ref{le3.3}. On such a component, set the maximal cliques as $K^1\to\cdots\to K^r$. Group these unassigned vertices according to their first occurrence: for each $i$, let $D_i$ consist of those first appearing in $K^i$. Since $D_i\subseteq K^i$, each nonempty $D_i$ is a clique. For every nonempty group, attach a new private leaf to its component node in $T_0$, and assign $D_i$ to this new edge. Denote the resulting tree by $T$. The assigned sets now partition $V(G)$ into nonempty cliques. If $T_0$ has no edge, all vertices are unassigned before this step, so at least one private leaf is added. Thus $T$ has at least one edge.

\par If two graph vertices are adjacent, some maximal clique contains both. Its directed component is an endpoint of each of their assigned tree edges. Hence sets indexed by non-incident tree edges are non-adjacent.

\par It remains to verify that \eqref{eq1} holds. At a component node corresponding to $K^1\to\cdots\to K^r$, every incident edge-clique has a common first index. This follows from Lemma \ref{le3.3} for the original edges of $T_0$ and from the definition of $D_i$ for the new leaf edges. Order the incident edges $e_1,\ldots,e_d$ by non-decreasing first index, breaking ties arbitrarily. Write $a_i$ for the first index of $C_{e_i}$. For $v\in C_{e_i}$, let $b(v)$ be its last occurrence on this directed component. Its local support is exactly $K^{a_i},\ldots,K^{b(v)}$.

\par Take $j>i$ and $w\in C_{e_j}$. Since $a_j\geq a_i$, these two support intervals meet exactly when $a_j\leq  b(v)$. Moreover, vertices from the two distinct edge-cliques cannot meet in any other directed component: distinct edges of the tree $T$ have at most one common endpoint, and a private leaf corresponds to no additional component. Therefore
\[
vw\in E(G)\iff a_j\leq  b(v).
\]
This condition is independent of the choice of $w$ and holds for a consecutive prefix of $j=i+1,\ldots,d$. Since $a_i\leq  b(v)$, the largest index $j$ with $a_j\leq  b(v)$ is at least $i$; take it as $p_t(v)$. At a private leaf, the only possible value is $p_t(v)=1$. Thus all parts of Definition \ref{def2.1} hold. Theorem \ref{th2.1} supplies the required representation.
\end{proof}

\par The two constructions have different roles. The incidence tree of Section 2 records directional components of a representation; the tree in Theorem \ref{th3.1} has one node for each maximal clique of the graph. Lemma \ref{le3.2} is what connects these objects. In the reverse construction, vertices within one edge-clique may have different last occurrences, which is why the partition condition does not require them to be twins.

\section{An example: from representation to clique tree}

\par We trace both characterizations through an example, starting from an $\llcorner$-EPG representation and ending with the clique tree $\mathcal{T}$ of Theorem \ref{th3.1} and the incidence tree $T_{\mathcal{R}}$ of Theorem \ref{th2.1}.

\par The graph $F_0$ (see Fig. \ref{fig1} left) is obtained by joining a vertex $v$ to all vertices of $P_6=u_0 u_1 \cdots u_5$. Its maximal cliques are $K^i = \{v, u_i, u_{i+1}\}$ for $0 \leq i \leq 4$.


\par \textbf{The $\llcorner$-EPG representation.}  Let 
\begin{align*}
P_v     &= [(4,2),(7,2)] \cup [(4,2),(4,9)];\quad P_{u_0}= [(6,2),(7,2)] \cup [(6,2),(6,3)];\\
P_{u_1} &= [(2,2),(7,2)] \cup [(2,2),(2,3)];\quad P_{u_2}= [(4,2),(5,2)] \cup [(4,2),(4,5)];\\
P_{u_3} &= [(4,4),(5,4)] \cup [(4,4),(4,7)];\quad P_{u_4}= [(4,6),(5,6)] \cup [(4,6),(4,9)];\\
P_{u_5} &= [(4,8),(5,8)] \cup [(4,8),(4,9)].
\end{align*}
This gives an $\llcorner$-EPG representation of $F_0$ (see Fig. \ref{fig1}, right).

\par \textbf{Directional components.} By Lemma \ref{le2.2}, each directional component induces an interval graph. The horizontal components are $H_1 =\{v,u_0,u_1,u_2\}$ (row 2), $H_2 = \{u_3\}$, $H_3 = \{u_4\}$, and $H_4 = \{u_5\}$, and the vertical components are $Q_1 = \{v, u_2, u_3, u_4, u_5\}$ (column 4), $Q_2 = \{u_0\}$, and $Q_3 = \{u_1\}$.

\par \textbf{First route: the incidence tree $T_{\mathcal{R}}$ (Theorem \ref{th2.1}).}
The bipartite incidence graph $T_{\mathcal{R}}$ on $\mathcal{H} \dot\cup \mathcal{V}$ joins $H$ to $Q$ when $H \cap Q \ne \emptyset$. Its edges and labels $C_{HQ} = H \cap Q$ are:
\[
H_1 Q_1=\{v, u_2\},H_1 Q_2=\{u_0\},H_1 Q_3=\{u_1\},Q_1 H_2=\{u_3\},Q_1 H_3=\{u_4\},Q_1 H_4=\{u_5\}
\]
The six labels are nonempty, pairwise disjoint, and partition $V(G)$. By Lemma \ref{le2.3}, $T_{\mathcal{R}}$ is a tree (Fig. \ref{fig2}). Taking $T = T_{\mathcal{R}}$ and $C_e = C_{HQ}$ gives an ordered clique partition tree (Definition \ref{def2.1}): at $H_1$, order the neighbors $Q_3, Q_1, Q_2$ by increasing column coordinate, and the condition (3) follows from the monotonicity of left endpoints $r_v$. 

\begin{figure}
\centering
  \setlength{\unitlength}{1bp}%
   \begin{picture}(436.24, 74.65)(0,0)
  \put(0,0){\includegraphics{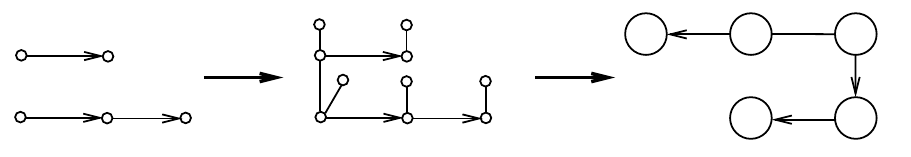}}
  \put(6.09,37.40){\fontsize{9.72}{11.66}\selectfont $A$}
  \put(45.29,37.40){\fontsize{9.72}{11.66}\selectfont $B$}
  \put(5.67,7.76){\fontsize{9.72}{11.66}\selectfont $C$}
  \put(44.87,7.76){\fontsize{9.72}{11.66}\selectfont $D$}
  \put(84.12,7.76){\fontsize{9.72}{11.66}\selectfont $E$}
  \put(198.58,45.81){\fontsize{9.72}{11.66}\selectfont $B$}
  \put(149.79,7.76){\fontsize{9.72}{11.66}\selectfont $C$}
  \put(188.99,7.76){\fontsize{9.72}{11.66}\selectfont $D$}
  \put(228.25,7.76){\fontsize{9.72}{11.66}\selectfont $E$}
  \put(139.66,46.28){\fontsize{9.72}{11.66}\selectfont $A$}
  \put(167.95,33.40){\fontsize{9.72}{11.66}\selectfont $F$}
  \put(199.32,32.86){\fontsize{9.72}{11.66}\selectfont $G$}
  \put(237.33,33.58){\fontsize{9.72}{11.66}\selectfont $I$}
  \put(199.47,61.31){\fontsize{9.72}{11.66}\selectfont $J$}
  \put(156.65,61.39){\fontsize{9.72}{11.66}\selectfont $L$}
  \put(303.80,55.23){\fontsize{9.72}{11.66}\selectfont $K^0$}
  \put(354.16,55.23){\fontsize{9.72}{11.66}\selectfont $K^1$}
  \put(404.52,55.23){\fontsize{9.72}{11.66}\selectfont $K^2$}
  \put(354.16,14.94){\fontsize{9.72}{11.66}\selectfont $K^3$}
  \put(404.52,14.94){\fontsize{9.72}{11.66}\selectfont $K^4$}
  \end{picture}%
  \caption{Steps of the second route in Section 4 within the $\llcorner$-EPG representation $\mathcal{R}=\langle F_0,P\rangle$.}
  \label{fig4}
\end{figure}

\par \textbf{Second route: the labeled tree $\mathcal{U}$ and the clique tree $\mathcal{T}$ (Theorem \ref{th3.1}).} Within each non-singleton component, order the local maximal cliques by increasing representative point and join consecutive ones into a directed path.

\par In $H_1$ (row~2), the local maximal cliques, ordered by increasing left point, are
\[
A = \{v, u_1, u_2\}, \qquad B = \{v, u_0, u_1\},
\]
giving the directed path $A \to B$  (see Fig. \ref{fig4} left).

\par In $Q_1$ (column~4), the local maximal cliques, ordered by increasing bottom point, are
\[
C = \{v, u_2, u_3\}, \qquad D = \{v, u_3, u_4\}, \qquad E = \{v, u_4, u_5\},
\]
giving the directed path $C \to D \to E$  (see Fig. \ref{fig4} left).

\par Each singleton component contributes a node:
\[
F = \{u_3\},\ G = \{u_4\},\ I = \{u_5\},\ J = \{u_0\},\ L = \{u_1\}.
\]

\par By Lemma \ref{le3.1}, the vertices in $C_{HQ} = H \cap Q$ have a common first local maximal clique in each component. Joining the two first-clique nodes by an undirected edge gives:
\[
A\text{--}C \text{ (from } C_{H_1 Q_1} = \{v, u_2\}\text{)},\quad
B\text{--}J \text{ (from } C_{H_1 Q_2} = \{u_0\}\text{)},\quad
A\text{--}L \text{ (from } C_{H_1 Q_3} = \{u_1\}\text{)},
\]
\[
C\text{--}F \text{ (from } C_{H_2 Q_1} = \{u_3\}\text{)},\quad
D\text{--}G \text{ (from } C_{H_3 Q_1} = \{u_4\}\text{)},\quad
E\text{--}I \text{ (from } C_{H_4 Q_1} = \{u_5\}\text{)}.
\]

\par The resulting tree $\mathcal{U}$ has ten nodes, three directed edges, and six undirected edges (see Fig. \ref{fig4} middle). Each directed edge joins two distinct maximal cliques of the same induced interval graph, so the two labels are incompatible under inclusion. Admissibility is immediate: every node has in-degree at most one and out-degree at most one, and every pair of incident separators whose intersection is nonempty includes an outgoing edge.

\par Each singleton node is a leaf whose label is contained in its neighbor's label:
\[
J = \{u_0\} \subseteq B, \quad L = \{u_1\} \subseteq A, \quad F = \{u_3\} \subseteq C, \quad G = \{u_4\} \subseteq D, \quad I = \{u_5\} \subseteq E.
\]
By Lemma \ref{le3.2}, deleting these five leaves yields a clique tree of $G$ with an admissible partial orientation (see Fig. \ref{fig4} right). The remaining nodes are
\[
A = K^1,\quad B = K^0,\quad C = K^2,\quad D = K^3,\quad E = K^4,
\]
and the partial orientation is
\[
K^0 \gets K^1\text{ -- }K^2 \to K^3 \to K^4.
\]

\begin{figure}
\centering
  \setlength{\unitlength}{1bp}%
 \begin{picture}(489.18, 97.36)(0,0)
  \put(0,0){\includegraphics{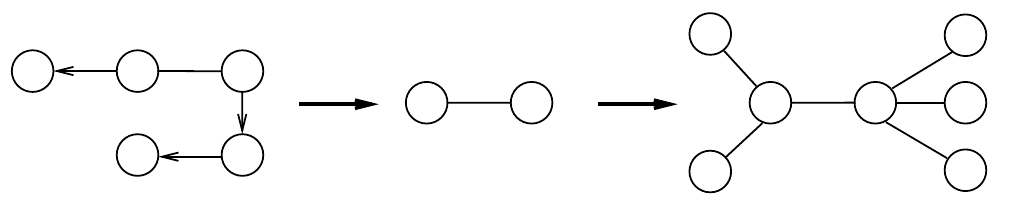}}
  \put(198.53,45.27){\fontsize{9.72}{11.66}\selectfont $H_1$}
  \put(248.89,45.27){\fontsize{9.72}{11.66}\selectfont $Q_1$}
  \put(363.54,45.27){\fontsize{9.72}{11.66}\selectfont $H_1$}
  \put(413.90,45.27){\fontsize{9.72}{11.66}\selectfont $Q_1$}
  \put(334.66,78.28){\fontsize{9.72}{11.66}\selectfont $Q_2$}
  \put(334.66,12.25){\fontsize{9.72}{11.66}\selectfont $Q_3$}
  \put(457.16,45.27){\fontsize{9.72}{11.66}\selectfont $H_3$}
  \put(457.16,77.67){\fontsize{9.72}{11.66}\selectfont $H_2$}
  \put(457.16,12.86){\fontsize{9.72}{11.66}\selectfont $H_4$}
  \put(9.39,60.45){\fontsize{9.72}{11.66}\selectfont $K^0$}
  \put(59.75,60.45){\fontsize{9.72}{11.66}\selectfont $K^1$}
  \put(110.10,60.45){\fontsize{9.72}{11.66}\selectfont $K^2$}
  \put(59.75,20.17){\fontsize{9.72}{11.66}\selectfont $K^3$}
  \put(110.10,20.17){\fontsize{9.72}{11.66}\selectfont $K^4$}
  \end{picture}%
  \caption{The relationship between the incidence tree $T_{\mathcal{R}}$ and the clique tree $\mathcal{T}$: contracting the directed paths of $\mathcal{T}$ and adding the private leaves yields $T_{\mathcal{R}}$.}
  \label{fig5}
\end{figure}

\par \textbf{The relation between $\mathcal{T}$ and $T_{\mathcal{R}}$.} Contracting each directed path of $\mathcal{U}$ into a node identifies $A \to B$ with $H_1$ and $C \to D \to E$ with $Q_1$  (see Fig. \ref{fig5} middle). The five singleton nodes are already in bijection with $H_2, H_3, H_4, Q_2, Q_3$. Under this contraction, the undirected edges of $\mathcal{U}$ become the edges of $T_{\mathcal{R}}$:
\[
A\text{--}C \mapsto H_1\text{--}Q_1, \quad B\text{--}J \mapsto H_1\text{--}Q_2, \quad \ldots, \quad E\text{--}I \mapsto Q_1\text{--}H_4.
\]
Thus, contracting the directed paths of $\mathcal{T}$ and adding the private leaves yields $T_{\mathcal{R}}$ (see Fig. \ref{fig5}, right). We emphasize that this observation is not used in either proof: $T_{\mathcal{R}}$ is constructed directly from the geometry by Lemma \ref{le2.3}, while $\mathcal{T}$ is obtained from $\mathcal{U}$ by Lemma \ref{le3.2}. We record it only to illustrate how the two characterizations relate to each other.

\section{A 2-SAT test for a prescribed clique tree}

\par Theorem \ref{th3.1} characterizes chordal $\llcorner$-EPG graphs by the existence of an admissible partial orientation on some clique tree. We now show that, for a fixed clique tree, testing admissibility reduces to 2-SAT.

\par Fix a clique tree $\mathcal{T}$ of a connected chordal graph $G$. For each edge $KQ$, introduce two distinct Boolean variables $X_{KQ}$ and $X_{QK}$. The value $X_{KQ}=1$ means that the edge is directed from $K$ to $Q$. The assignments $(X_{KQ},X_{QK})=(1,0)$ and $(0,1)$ specify the two directions, while $(0,0)$ leaves the edge undirected. In particular, $X_{QK}$ is not the negation of $X_{KQ}$.

\par Construct a formula $\psi(\mathcal{T})$ by taking the conjunction of four types of clauses. First, for every edge $KQ$, forbid its simultaneous orientation in both directions:
\begin{equation}\label{eq6}
 \neg X_{KQ}\vee\neg X_{QK}.
\end{equation}
Second, for each node $K$ and each unordered pair of distinct neighbors $Q,R$, impose the out-degree restriction
\begin{equation}\label{eq7}
 \neg X_{KQ}\vee\neg X_{KR}.
\end{equation}
Third, for the same pairs, impose the in-degree restriction
\begin{equation}\label{eq8}
 \neg X_{QK}\vee\neg X_{RK}.
\end{equation}
Finally, whenever $S_{KQ}\cap S_{KR}\ne\emptyset$, impose the separator-covering condition
\begin{equation}\label{eq9}
 X_{KQ}\vee X_{KR}.
\end{equation}
Every clause has two literals, so this is a 2-SAT instance. There is no clause $X_{KQ}\vee X_{QK}$: such a clause would incorrectly forbid an undirected edge.

\begin{theorem}\label{th4.1}
The formula $\psi(\mathcal{T})$ is satisfiable if and only if the prescribed clique tree $\mathcal{T}$ has an admissible partial orientation. If $\mathcal{T}$ has $M$ nodes, set
\[
\ell_1=\sum_{K\in V(\mathcal{T})}\binom{d_{\mathcal{T}}(K)}2,\quad \ell_2=\sum_{K\in V(\mathcal{T})}\bigl|\{\{Q,R\}\subseteq N_{\mathcal{T}}(K)\mid S_{KQ}\cap S_{KR}\ne\emptyset\}\bigr|.
\]
Then the direct formula has $2(M-1)$ variables and $(M-1)+2\ell_1+\ell_2$ clauses, with $\ell_2\leq \ell_1\leq \binom{M-1}{2}$. Once the formula is constructed, one can test satisfiability and recover an admissible partial orientation in $O(M^2)$ time. With explicit vertex sets for the cliques, the compatibility test takes $O(nM^2)$ time, where $n=|V(G)|$.
\end{theorem}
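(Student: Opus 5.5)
The plan is to prove the equivalence through an explicit bijection between admissible partial orientations of $\mathcal{T}$ and satisfying assignments of $\psi(\mathcal{T})$, and then to count variables and clauses and bound the running time.

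\textbf{Orientation to assignment.} Given an admissible partial orientation, set $X_{KQ}=1$ exactly when $K\to Q$. Since an edge receives at most one direction, every clause \eqref{eq6} holds. Clause \eqref{eq7} at $K$ can fail only if $K$ has two outgoing edges, and \eqref{eq8} only if it has two incoming edges; both are excluded by \eqref{eq3}. Clause \eqref{eq9} is a literal transcription of \eqref{eq4}.

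\textbf{Assignment to orientation.} Given a satisfying assignment, clause \eqref{eq6} ensures that no edge has both variables true. Orient $KQ$ from $K$ to $Q$ if $X_{KQ}=1$, from $Q$ to $K$ if $X_{QK}=1$, and leave it undirected otherwise. The degree conditions \eqref{eq3} then follow from \eqref{eq7} and \eqref{eq8}, and the separator condition \eqref{eq4} follows from \eqref{eq9}. The only point needing care is that $X_{QK}$ is an independent variable and not $\neg X_{KQ}$, which is what makes the undirected state $(0,0)$ available. Incidentally, this also explains why no clause $X_{KQ}\vee X_{QK}$ appears.

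\textbf{Counting.} A tree on $M$ nodes has $M-1$ edges, and there are two variables per edge, so $2(M-1)$ variables. There are $M-1$ clauses of type \eqref{eq6}. Each unordered pair of neighbors at each node gives one clause of type \eqref{eq7} and one of type \eqref{eq8}, for $2\ell_1$ in total. There are $\ell_2$ clauses of type \eqref{eq9}, and $\ell_2\le\ell_1$ because those pairs form a subfamily of the pairs counted in $\ell_1$. For $\ell_1\le\binom{M-1}{2}$, we use convexity of $\binom{x}{2}$ under the constraints $\sum_K d(K)=2(M-1)$ and $d(K)\ge1$ (for $M\ge2$). The sum is then maximized by a star, giving $\binom{M-1}{2}$.

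\textbf{Running time.} The 2-SAT algorithm (implication graph plus strongly connected components) runs in time linear in the formula size, which is $O(M+\ell_1)=O(M^2)$. It also produces a satisfying assignment, from which the orientation is read off as above. For the separator tests, at each node $K$ we compare every pair of incident separators. Each separator has at most $n$ vertices, so with characteristic vectors one intersection test costs $O(n)$. There are $\ell_1=O(M^2)$ pairs in total, and computing all $M-1$ separators costs $O(nM)$, giving $O(nM^2)$ overall.

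I expect the main obstacle to be the bound $\ell_1\le\binom{M-1}{2}$, which requires the convexity argument rather than a trivial estimate. Everything else is a direct transcription of the definitions.
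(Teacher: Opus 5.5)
Your proposal is correct and follows the paper's proof essentially step for step: the same translation between directed edges and true variables in both directions, the same clause count, and the same use of linear-time 2-SAT together with $O(n)$ membership-vector intersection tests over the $\ell_1=O(M^2)$ incident pairs. The one divergence is the bound $\ell_1\le\binom{M-1}{2}$, which is not the obstacle you expected: your convexity argument is valid, but the paper simply observes that $\ell_1$ counts unordered pairs of tree edges sharing an endpoint, and since two distinct edges share at most one endpoint, this injects into the $\binom{M-1}{2}$ unordered pairs of edges.
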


\begin{proof}
\par Given an admissible partial orientation, assign 1 to the variables corresponding to its directed edges and 0 to all others. A tree edge has at most one direction, so \eqref{eq6} holds. The directed-degree bounds give \eqref{eq7} and \eqref{eq8}. Condition \eqref{eq4} gives \eqref{eq9}. Thus the assignment satisfies the entire formula.

\par Conversely, let an assignment satisfy $\psi(\mathcal{T})$. Direct $KQ$ from $K$ to $Q$ when $X_{KQ}=1$, direct it the other way when $X_{QK}=1$, and leave it undirected when both variables are 0. Clause \eqref{eq6} makes this rule well-defined. If a node had two outgoing edges, their clause \eqref{eq7} would fail; if it had two incoming edges, their clause \eqref{eq8} would fail. Clause \eqref{eq9} gives every required separator-covering condition. The partial orientation is therefore admissible.

\par There are two variables for each of the $M-1$ tree edges. Clauses of type \eqref{eq6} contribute $M-1$, the out-degree and in-degree clauses contribute $\ell_1$ each, and the covering clauses contribute $\ell_2$ clauses. The number $\ell_1$ counts unordered pairs of tree edges with a common endpoint. Any pair of distinct tree edges has at most one common endpoint, so $\ell_1\leq \binom{M-1}{2}$. The definition gives $\ell_2\leq\ell_1$. Thus the formula has quadratic size.

\par A 2-SAT formula can be solved in time linear in its number of variables and clauses by the implication-graph algorithm \cite{aspvall1979linear}. This yields the $O(M^2)$ bound. Represent each clique by its membership vector on $V(G)$. Every separator and every separator-intersection test then takes $O(n)$ time, and there are $O(M^2)$ tests. This proves the stated compatibility bound. The one-node clique tree has an empty formula and is feasible without any orientation choice.
\end{proof}

\section{Minimal obstructions and graphs with a unique clique tree}
\par It follows from Observation 21 of \cite{cameron2016edge} that every chordal $\llcorner$-EPG graph is strongly chordal. In this section, we present a graph $H$ that is strongly chordal but not $\llcorner$-EPG; its clique tree admits no admissible partial orientation. Moreover, $H$ is a minimal forbidden induced subgraph for $\llcorner$-EPG graphs. Finally, we seek a class of graphs for which we need not identify a clique tree admitting an admissible partial orientation, namely a class of graphs with a unique clique tree.

\begin{figure}
\centering
  \setlength{\unitlength}{1bp}%
  \begin{picture}(429.46, 123.49)(0,0)
  \put(0,0){\includegraphics{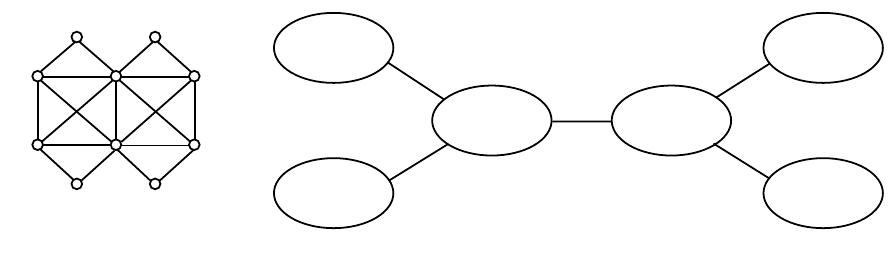}}
  \put(48.60,7.76){\fontsize{9.72}{11.66}\selectfont $H$}
  \put(67.21,110.11){\fontsize{9.72}{11.66}\selectfont $x$}
  \put(52.44,93.45){\fontsize{9.72}{11.66}\selectfont $a$}
  \put(52.32,42.54){\fontsize{9.72}{11.66}\selectfont $b$}
  \put(96.24,84.82){\fontsize{9.72}{11.66}\selectfont $p$}
  \put(96.45,51.42){\fontsize{9.72}{11.66}\selectfont $q$}
  \put(6.54,85.19){\fontsize{9.72}{11.66}\selectfont $r$}
  \put(5.67,51.42){\fontsize{9.72}{11.66}\selectfont $s$}
  \put(30.36,110.11){\fontsize{9.72}{11.66}\selectfont $z$}
  \put(66.78,26.73){\fontsize{9.72}{11.66}\selectfont $y$}
  \put(29.93,26.73){\fontsize{9.72}{11.66}\selectfont $w$}
  \put(222.44,62.50){\fontsize{9.72}{11.66}\selectfont $a,b,r,s$}
  \put(305.21,62.50){\fontsize{9.72}{11.66}\selectfont $a,b,p,q$}
  \put(383.63,97.36){\fontsize{9.72}{11.66}\selectfont $a,p,x$}
  \put(383.63,27.63){\fontsize{9.72}{11.66}\selectfont $b,q,y$}
  \put(148.65,97.36){\fontsize{9.72}{11.66}\selectfont $a,r,z$}
  \put(148.65,27.63){\fontsize{9.72}{11.66}\selectfont $b,s,w$}
  \put(317.61,85.97){\fontsize{9.72}{11.66}\selectfont $A$}
  \put(232.90,85.97){\fontsize{9.72}{11.66}\selectfont $B$}
  \put(353.27,98.23){\fontsize{9.72}{11.66}\selectfont $X$}
  \put(353.27,23.92){\fontsize{9.72}{11.66}\selectfont $Y$}
  \put(189.44,98.23){\fontsize{9.72}{11.66}\selectfont $Z$}
  \put(189.81,26.15){\fontsize{9.72}{11.66}\selectfont $W$}
  \end{picture}%
  \caption{The graph $H$ and its unique clique tree.}
  \label{fig6}
\end{figure}

\par Recall that a vertex $v$ is {\em simple} if the closed neighborhoods of the vertices in $N[v]$ are linearly ordered by inclusion, i.e., 
\[
\forall x,y\in N[v],\quad N[x]\subseteq N[y]\text{ or }N[y]\subseteq N[x].
\]
\begin{theorem}[\cite{farber1983characterizations}]\label{th6.1}
A graph is strongly chordal if and only if it has an ordering in which each vertex is simple in the graph induced by that vertex and its successors.
\end{theorem}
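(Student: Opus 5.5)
The statement is Farber's characterization of strongly chordal graphs, cited from \cite{farber1983characterizations}. The paper uses it only as a tool for the strongly chordal example $H$. The plan is to reproduce Farber's argument, taking as the definition that a graph is strongly chordal if it is chordal and every even cycle of length at least six has an odd chord, meaning a chord joining two vertices at odd distance along the cycle. The working characterization will be the equivalent \emph{strong elimination ordering} (equivalently, freeness of the neighborhood matrix from the $2\times2$ $\Gamma$-submatrix after row and column permutation).

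For sufficiency, assume an ordering $v_1,\dots,v_n$ in which each $v_i$ is simple in $G_i=G[\{v_i,\dots,v_n\}]$.

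1. A simple vertex is simplicial: two non-adjacent $x,y\in N(v)$ would have incomparable closed neighborhoods, since $x\in N[x]\setminus N[y]$ and $y\in N[y]\setminus N[x]$. The ordering is therefore a perfect elimination ordering, and $G$ is chordal.
2. Suppose $C=c_1c_2\cdots c_{2k}$ with $k\ge3$ is an even cycle with no odd chord. Let $c_1$ be the vertex of $C$ removed first, and work in the graph $G_i$ in which $c_1$ is simple.
3. Then $c_2,c_{2k}\in N[c_1]$, so $N[c_2]$ and $N[c_{2k}]$ are comparable in $G_i$. Say $N[c_{2k}]\subseteq N[c_2]$; then $c_{2k-1}$ is adjacent to $c_2$.
4. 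The chord $c_2c_{2k-1}$ spans an odd distance along $C$, which is a contradiction.

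This direction is short and routine.

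For necessity, assume $G$ is strongly chordal. Since induced subgraphs of strongly chordal graphs are strongly chordal, it suffices to show that every strongly chordal graph has a simple vertex; deleting it and recursing then produces the required ordering.

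1. First show that among the simplicial vertices, one whose closed neighborhood is inclusion-minimal (or, in Farber's formulation, one whose neighbourhood is \emph{maximal} in a suitable sense) is simple.
2. Suppose $x,y\in N[v]$ had incomparable closed neighborhoods. Choose witnesses $x'\in N[x]\setminus N[y]$ and $y'\in N[y]\setminus N[x]$.
3. Because $v$ is simplicial, $x$ and $y$ are adjacent, so this configuration forms a sun-like pattern.
4. Using chordality and the absence of odd-chordless even cycles, exclude induced suns. Then argue by induction on $|V(G)|$, applying the induction hypothesis to $G-v$ or to the subgraph induced by the non-neighbours, to produce a simple vertex.

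The main obstacle is this existence step. The cleanest route I would try first is Farber's: show that strongly chordal graphs admit a strong elimination ordering, by lexicographic doubly-lexical ordering of the neighborhood matrix, and check that the first vertex of such an ordering is simple. Since the result is classical, in the paper I would cite \cite{farber1983characterizations} for this direction rather than reprove it.
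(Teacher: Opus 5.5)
The paper gives no proof of this statement. It is quoted from Farber and used only in the ``if'' direction, to conclude from the ordering $x,y,z,w,p,q,r,s,a,b$ that $H$ is strongly chordal. Your sufficiency argument proves exactly that direction. It is correct and complete when strongly chordal means chordal with an odd chord in every even cycle of length at least six. Simple vertices are simplicial. At the first-eliminated vertex $c_1$ of an even cycle without odd chords, comparability of $N[c_2]$ and $N[c_{2k}]$ forces the chord $c_2c_{2k-1}$ or $c_3c_{2k}$, each at distance $3$ along the cycle. So for the part the paper actually relies on, you give a self-contained argument where the paper only cites.

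Be aware that this direction is short only because of the definition you chose. With the other standard definition, via strong elimination orderings, the immediate direction is the converse: a strong elimination ordering is itself a simple elimination ordering. A simple elimination ordering, however, need not be strong; take a triangle $vuw$ with a pendant vertex $t$ at $w$, ordered $v,t,w,u$. The equivalence of the two definitions is itself one of the theorems of the cited paper.

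For necessity you also fall back on the citation, as the paper does, and that is acceptable. The outline you give, however, could not be turned into a proof, because its first step is false. Take the bull: a triangle $vxy$ with pendant vertices $x'$ at $x$ and $y'$ at $y$. It is strongly chordal, being chordal on only five vertices. Its simplicial vertices $v,x',y'$ have pairwise incomparable closed neighborhoods, so $N[v]$ is both inclusion-minimal and inclusion-maximal among them. Yet $N[x]=\{v,x,y,x'\}$ and $N[y]=\{v,x,y,y'\}$ are incomparable, so $v$ is not simple.

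Steps 2--4 do not supply the missing idea either. Excluding suns is the easy consequence of the odd-chord condition: the $2k$-cycle alternating between the central and outer vertices of a $k$-sun has only even chords. The substance of the theorem is that every chordal graph with the odd-chord property still has some simple vertex, and that is where the work in the cited paper lies.

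Your fallback is to take the first vertex of a strong elimination ordering obtained from a doubly lexical ordering of the neighborhood matrix. That is a sound route, though doubly lexical orderings are due to Lubiw, not Farber. It is also genuinely global: it rests on the totally balanced matrix characterization rather than on a local choice of simplicial vertex.
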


\par Let $H$ have vertex set $\{a,b,p,q,r,s,x,y,z,w\}$. Begin with two complete graphs on $A=\{a,b,p,q\}$ and $B=\{a,b,r,s\}$, with no edges between $\{p,q\}$ and $\{r,s\}$. Add four independent vertices $x,y,z,w$ with neighborhoods $N_H(x)=\{a,p\}$, $N_H(y)=\{b,q\}$, $N_H(z)=\{a,r\}$, and $N_H(w)=\{b,s\}$. $H$ is shown in Fig. \ref{fig6}. The other four maximal cliques are $X=\{a,p,x\}$, $Y=\{b,q,y\}$, $Z=\{a,r,z\}$, and $W=\{b,s,w\}$. Each added vertex is simplicial. Deleting these four vertices leaves two complete graphs joined along the clique $\{a,b\}$, so $H$ is chordal.

\begin{lemma}\label{le6.1}
The unique clique tree of $H$ has edge set $\{AB,AX,AY,BZ,BW\}$. It admits no admissible partial orientation.
\end{lemma}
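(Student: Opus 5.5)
We first verify that $\{A,B,X,Y,Z,W\}$ is the full list of maximal cliques and then show that the clique tree is forced. Each simplicial vertex $x,y,z,w$ lies in exactly one maximal clique. Any clique containing none of them lies in $\{a,b,p,q,r,s\}$. Since $\{p,q\}$ is anticomplete to $\{r,s\}$, such a clique lies in $A$ or in $B$.

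To show the clique tree is unique, we use running intersection.
\begin{itemize}
\item The vertex $p$ lies only in $A$ and $X$, so $\mathcal{T}_p$ must be connected and hence equals the edge $AX$. Similarly, $q$ forces $AY$, $r$ forces $BZ$, and $s$ forces $BW$.
\item A tree on six nodes has five edges. The four forced edges form two components, $\{A,X,Y\}$ and $\{B,Z,W\}$, so exactly one further edge joins them.
\item If that edge were anything other than $AB$, the vertex $a$ would fail running intersection. For example, the edge $XZ$ would make $\mathcal{T}_a$ equal to $A,X,Z,B$; this is connected, so $a$ is no obstruction here. The vertex $b$, however, lies in $A,Y,B,W$ but not in $X$ or $Z$, so with edge $XZ$ the set $\mathcal{T}_b$ is disconnected.
\end{itemize}
We therefore check each of the nine candidate connecting edges against both $a$ and $b$. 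Alternatively, we use a cleaner argument: the connecting edge must have a separator containing both $a$ and $b$, because each of $\mathcal{T}_a$ and $\mathcal{T}_b$ meets both sides. The only cliques containing both $a$ and $b$ are $A$ and $B$, so the connecting edge is $AB$.

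For non-orientability, we apply Lemma~\ref{le3.3}. The vertex $a$ lies in $X,A,B,Z$, and these nodes form a path in the tree. The vertex $b$ lies in $Y,A,B,W$, another path. Now consider the separators at $A$:
\begin{itemize}
\item $S_{AX}=\{a,p\}$, $S_{AB}=\{a,b\}$, and $S_{AY}=\{b,q\}$.
\item $S_{AX}\cap S_{AB}\ni a$, so \eqref{eq4} forces $A\to X$ or $A\to B$.
\item $S_{AY}\cap S_{AB}\ni b$, so \eqref{eq4} forces $A\to Y$ or $A\to B$.
\end{itemize}
Since $d^+(A)\le1$, we cannot have both $A\to X$ and $A\to Y$, so $A\to B$ is forced. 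The symmetric argument at $B$, using $S_{BZ}\cap S_{BA}\ni a$ and $S_{BW}\cap S_{BA}\ni b$, forces $B\to A$. The edge $AB$ would then receive both directions, which is a contradiction.

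The main point needing care is the uniqueness step, specifically justifying that the connecting edge must be $AB$. The separator argument above settles this, so the rest is routine.
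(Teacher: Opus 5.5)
Your proposal is correct and takes essentially the same route as the paper: the private vertices $p,q,r,s$ force the pendant edges $AX,AY,BZ,BW$, the supports of $a$ and $b$ force the remaining edge to be $AB$, and then \eqref{eq4} together with the out-degree bound forces both $A\to B$ and $B\to A$. In a write-up, drop the abandoned false start about $XZ$ and the appeal to Lemma~\ref{le3.3}, which you never actually use, and state existence of the tree explicitly (the paper checks running intersection for the five edges directly).
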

\begin{proof}
\par The vertex $p$ belongs to exactly the maximal cliques $A$ and $X$. The running-intersection property therefore forces $AX$ to be an edge of every clique tree. The vertices $q,r,s$ similarly force $AY,BZ,BW$, respectively. Every node on the clique tree path between $A$ and $B$ must contain both $a$ and $b$. Only $A$ and $B$ contain both vertices, so $AB$ is also forced. These five edges form a tree on the six maximal cliques. It satisfies the running-intersection property: the cliques containing $a$ form the path $X-A-B-Z$, those containing $b$ form $Y-A-B-W$, each of $p,q,r,s$ occurs in its forced adjacent pair, and each of $x,y,z,w$ occurs in one clique. This proves both existence and uniqueness. Its clique tree is shown in Fig. \ref{fig6} right.

\par At $A$, the three separators are $S_{AB}=\{a,b\}$, $S_{AX}=\{a,p\}$, and $S_{AY}=\{b,q\}$. The nonempty intersections $S_{AB}\cap S_{AX}\neq\emptyset$ and $S_{AB}\cap S_{AY}\neq\emptyset$ force $A\to B$. By the same reasoning at $B$, the nonempty intersections among its three separators force $B\to A$. These two directions on $AB$ are incompatible.
\end{proof}

\begin{theorem}\label{th6.2}
$H$ is a strongly chordal graph and is a minimal forbidden induced subgraph for $\llcorner$-EPG graphs.
\end{theorem}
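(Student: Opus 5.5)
The proof has three parts: strong chordality, non-membership, and minimality.

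\textbf{Strong chordality.} I would use Theorem \ref{th6.1} and exhibit a simple elimination ordering explicitly. Start with the simplicial vertices $x,y,z,w$.

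For $x$, we have $N[x]=\{x,a,p\}$. Here $N[x]\subseteq N[p]=\{a,b,p,q,x\}\subseteq N[a]$, since $a$ is adjacent to everything except $y$ and $w$. So the closed neighbourhoods in $N[x]$ form a chain and $x$ is simple. The cases $y,z,w$ are symmetric, with the chain at each step computed in the graph induced on the remaining vertices.

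After deleting these four vertices, the remaining graph is two copies of $K_4$ glued along $\{a,b\}$. There I would eliminate $p,q,r,s$ and then $a,b$. In this graph $N[p]=\{a,b,p,q\}=N[q]\subseteq N[a]=N[b]$, which is a chain, so each step is again simple.

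\textbf{Non-membership.} $H$ is chordal and connected. By Lemma \ref{le6.1} its unique clique tree has no admissible partial orientation. Theorem \ref{th3.1} then gives that $H$ is not $\llcorner$-EPG.

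\textbf{Minimality.} It suffices to show that $H-v$ is $\llcorner$-EPG for every vertex $v$. By the automorphisms of $H$, the orbits are $\{x,y,z,w\}$, $\{p,q,r,s\}$ and $\{a,b\}$. The automorphisms are: swapping $A$-side with $B$-side ($p\leftrightarrow r$, $q\leftrightarrow s$, $x\leftrightarrow z$, $y\leftrightarrow w$), and swapping $a\leftrightarrow b$ together with $p\leftrightarrow q$, $x\leftrightarrow y$, $r\leftrightarrow s$, $z\leftrightarrow w$. So three deletions suffice: $v=x$, $v=p$, and $v=a$.

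Each $H-v$ is connected and chordal. For each, I would take a clique tree and exhibit an admissible partial orientation, then invoke Theorem \ref{th3.1}. An explicit $\llcorner$-representation would also do.

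\begin{itemize}
\item For $v=x$: the clique $X$ disappears. Node $A$ now has neighbours $B$ and $Y$ only. Orient $B\to A$ and $A\to Y$. This covers the pair $\{AB,AY\}$ at $A$ and gives $A$ in-degree $1$ and out-degree $1$. At $B$, the outgoing edge $B\to A$ covers the pairs involving $AB$. The remaining pair is $\{BZ,BW\}$, whose separators $\{a,r\}$ and $\{b,s\}$ are disjoint, so it needs no cover. Node $B$ has out-degree $1$, so this is admissible.
\item For $v=p$: $X$ becomes $\{a,x\}$ and $A$ becomes $\{a,b,q\}$. The clique tree may now change. I would attach $X$ somewhere along the path of $a$ so that $A$ has fewer conflicting separators; for example, attach $X$ to $Z$ as a leaf below $B$. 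I would then redo the local check.
\item For $v=a$: the cliques are $\{b,p,q\}$, $\{b,r,s\}$, $\{p,x\}$, $\{b,q,y\}$, $\{r,z\}$, $\{b,s,w\}$. I would choose a clique tree in which $b$'s cliques form a path, for instance $W - \{b,r,s\} - \{b,p,q\} - Y$ with $\{p,x\}$ and $\{r,z\}$ as leaves. The orientation would be chosen along this path.
\end{itemize}

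The main obstacle is the last part. After deleting $p$ or $a$ the clique tree is no longer unique. One must pick a suitable tree so that, at each node, the conflicting separator pairs can all be covered by a single outgoing edge, which is condition \eqref{eq4}. Whenever the tree choice is unclear, I would fall back on drawing an explicit grid representation. Degree and separator checks are routine once the tree is fixed.
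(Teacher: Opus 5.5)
Your route is the paper's. You use the elimination ordering $x,y,z,w,p,q,r,s,a,b$ with Theorem~\ref{th6.1}, and non-membership from Lemma~\ref{le6.1} and Theorem~\ref{th3.1}. You reduce minimality to $H-x$, $H-p$, $H-a$ by symmetry; your two automorphisms are correct and give exactly those orbits. For $H-x$ you use the same orientation $B\to A\to Y$. Your tentative trees for $H-p$ (hang $\{a,x\}$ as a leaf at $Z$) and $H-a$ (join $\{b,p,q\}$ to $\{b,r,s\}$) are also the ones the paper uses.

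The gap is that for $v=p$ and $v=a$ you stop at a plan. These clique trees are not unique, so exhibiting and checking an admissible orientation on the chosen tree is the substance of minimality, and it is not a formality.

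In $H-p$, the vertices $q,r,s$ force the edges $A'Y$, $BZ$, $BW$, and the pair $\{a,b\}$ forces $A'B$, where $A'=\{a,b,q\}$. Only the attachment point of $X'=\{a,x\}$ is free, and it must be $A'$, $B$ or $Z$. Two of these fail:
\begin{itemize}
\item At $A'$, the overlaps $\{a,b\}\cap\{a\}$ and $\{a,b\}\cap\{b,q\}$ force $A'\to B$, while $B$ still forces $B\to A'$, exactly as in Lemma~\ref{le6.1}.
\item At $B$, the overlapping separator pairs $(\{a,b\},\{a\})$, $(\{a\},\{a,r\})$ and $(\{a,b\},\{b,s\})$ cannot all be covered by one outgoing edge.
\end{itemize}
With $X'$ at $Z$, you must also handle the new overlap $\{a,r\}\cap\{a\}\ne\emptyset$ at $Z$. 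The orientation $Z\to B\to A'\to Y$ works. Here $B\to A'$ covers both overlapping pairs at $B$, because $\{a,r\}\cap\{b,s\}=\emptyset$, and $A'\to Y$ covers the pair $\{a,b\},\{b,q\}$.

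In $H-a$, orienting $\{b,r,s\}\to\{b,p,q\}\to\{b,q,y\}$ works. At each of the two central nodes, only the two separators containing $b$ meet.

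Finally, state explicitly that $\llcorner$-EPG is hereditary (delete the vertex's path); this is what makes single-vertex deletions suffice. Also verify running intersection for each tree you use.
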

\begin{proof}
Theorem \ref{th3.1} and Lemma \ref{le6.1} show that $H$ is not a $\llcorner$-EPG graph. We first prove that $H-v$ is an $\llcorner$-EPG graph for every vertex $v$, and then verify strongly chordality. By symmetry, it suffices to treat $H-a$, $H-p$, and $H-x$. For each graph, we give all maximal cliques, verify a clique tree, and check its partial orientation.

 \par \textbf{Deleting $a$.} The maximal cliques are $A'=\{b,p,q\}$, $B'=\{b,r,s\}$, $X'=\{p,x\}$, $Y'=\{b,q,y\}$, $Z'=\{r,z\}$, and $W'=\{b,s,w\}$. Take the tree with edges $A'B'$, $A'X'$, $A'Y'$, $B'Z'$, and $B'W'$. The cliques containing $b$ form the path $Y'-A'-B'-W'$. The vertices $p,q,r,s$ occur in the adjacent pairs $A'X'$, $A'Y'$, $B'Z'$, and $B'W'$, respectively; each remaining vertex occurs in one clique. Hence this is a clique tree.

\par Orient $B'\to A'\to Y'$ and leave $A'X'$, $B'Z'$, and $B'W'$ undirected. The separators at $A'$ are $\{b\}$, $\{p\}$, and $\{b,q\}$. Only the first and third intersect, and their pair has the outgoing edge $A'\to Y'$. The separators at $B'$ are $\{b\}$, $\{r\}$, and $\{b,s\}$. Only the first and third intersect, and their pair has the outgoing edge $B'\to A'$. Leaves have no pairs to check. The directed edges form one directed path, so both degree bounds hold. The orientation is admissible.

\par \textbf{Deleting $p$.} The maximal cliques are $A'=\{a,b,q\}$, $B=\{a,b,r,s\}$, $X'=\{a,x\}$, $Y=\{b,q,y\}$, $Z=\{a,r,z\}$, and $W=\{b,s,w\}$. Use the tree with edges $ZB$, $BA'$, $A'Y$, $ZX'$, and $BW$. The cliques containing $a$ form $X'-Z-B-A'$, and those containing $b$ form $W-B-A'-Y$. The vertices $q,r,s$ occur in the adjacent pairs $A'Y$, $BZ$, and $BW$, respectively; each of $x,y,z,w$ occurs once. Hence this is a clique tree.

\par Orient $Z\to B\to A'\to Y$ and leave $ZX'$ and $BW$ undirected. At $B$, the separators on $BZ$, $BA'$, and $BW$ are $\{a,r\}$, $\{a,b\}$, and $\{b,s\}$. The middle separator meets both of the others, which are disjoint. The edge $B\to A'$ covers both pairs. At $A'$, the two separators $\{a,b\}$ and $\{b,q\}$ meet, and $A'\to Y$ covers their pair. At $Z$, the separators $\{a,r\}$ and $\{a\}$ meet, and $Z\to B$ covers their pair. All other nodes are leaves. Again the directed edges form one directed path, so the orientation is admissible.

\par \textbf{Deleting $x$.} The maximal cliques are $A,B,Y,Z,W$. Use the tree with edges $AB$, $AY$, $BZ$, and $BW$. The support of $a$ is $A-B-Z$, and the support of $b$ is $Y-A-B-W$. The vertex $p$ occurs only in $A$, the vertices $q,r,s$ occur in the adjacent pairs $AY$, $BZ$, and $BW$, respectively, and the remaining vertices occur in one clique. This is a clique tree.

\par Orient $B\to A\to Y$ and leave $BZ$ and $BW$ undirected. At $A$, the separators $\{a,b\}$ and $\{b,q\}$ intersect, and the edge $A\to Y$ covers their pair. At $B$, the separators on $BA$, $BZ$, and $BW$ are $\{a,b\}$, $\{a,r\}$, and $\{b,s\}$. Only the first intersects either of the others, so $B\to A$ covers all required pairs. The two degree bounds hold, and all other nodes are leaves. The orientation is admissible.

\par Theorem \ref{th3.1} now gives $H-a,H-p,H-x$ are $\llcorner$-EPG graphs. The stated symmetries give $H-v$ is an $\llcorner$-EPG graph for every vertex $v$. Deleting a vertex from a path representation preserves an EPG representation and its permitted orientations, so $\llcorner$-EPG is hereditary. Every proper induced subgraph of $H$ is contained in some $H-v$, and hence belongs to $\llcorner$-EPG graphs. This proves minimality.

\par In $H$, the following inclusions hold: 
\begin{align*}
 N_H[x]\subseteq N_H[p]\subseteq N_H[a],\qquad N_H[y]\subseteq N_H[q]\subseteq N_H[b],\\
 N_H[z]\subseteq N_H[r]\subseteq N_H[a],\qquad N_H[w]\subseteq N_H[s]\subseteq N_H[b].
\end{align*}

\par For example, the first chain is
$\{x,a,p\}\subseteq\{p,a,b,q,x\}\subseteq\{a,b,p,q,r,s,x,z\}$. Since $N_H[x]=\{x,a,p\}$, the first chain makes $x$ simple; the other chains give the same conclusion for $y,z,w$. Restricting closed neighborhoods to an induced subgraph preserves these inclusions whenever the compared vertices remain. Thus $x,y,z,w$ can be deleted in this order as simple vertices. Let $F=H-\{x,y,z,w\}$. The vertices $a$ and $b$ are universal in $F$ and $N_F[a]=N_F[b]=V(F)$. The chain $N_F[p]=N_F[q]\subseteq N_F[a]=N_F[b]$ makes $p$ and $q$ simple. In $F-p$, the closed neighborhood of $q$ is $\{a,b,q\}$, and $a,b$ are still universal, so $q$ is simple. Deleting $q$ leaves the complete graph on $\{a,b,r,s\}$, all of whose vertices are simple in any subsequent induced subgraph. Therefore $x,y,z,w,p,q,r,s,a,b$ is a simple elimination ordering, and $H$ is a strongly chordal graph by Theorem \ref{th6.1}.
\end{proof}

\par For a split graph $G$, we consider a split partition $V(G)=C\dot\cup I$ in which $C$ is a maximal clique, $I$ is nonempty, and every vertex of $I$ has a nonempty neighborhood.

\begin{figure}
\centering
  \setlength{\unitlength}{1bp}%
   \begin{picture}(356.32, 99.06)(0,0)
  \put(0,0){\includegraphics{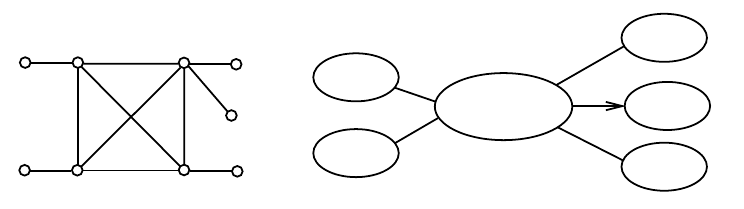}}
  \put(114.95,42.23){\fontsize{9.72}{11.66}\selectfont $s$}
  \put(33.36,76.04){\fontsize{9.72}{11.66}\selectfont $v_1$}
  \put(82.68,76.04){\fontsize{9.72}{11.66}\selectfont $v_2$}
  \put(33.36,7.76){\fontsize{9.72}{11.66}\selectfont $v_4$}
  \put(82.68,7.76){\fontsize{9.72}{11.66}\selectfont $v_3$}
  \put(5.67,75.78){\fontsize{9.72}{11.66}\selectfont $u_1$}
  \put(6.93,7.88){\fontsize{9.72}{11.66}\selectfont $u_4$}
  \put(159.47,60.21){\fontsize{9.72}{11.66}\selectfont $v_1,u_1$}
  \put(159.47,23.83){\fontsize{9.72}{11.66}\selectfont $v_4,u_4$}
  \put(216.34,46.00){\fontsize{9.72}{11.66}\selectfont $v_1,v_2,v_3,v_4$}
  \put(109.23,76.04){\fontsize{9.72}{11.66}\selectfont $u_2$}
  \put(109.23,7.76){\fontsize{9.72}{11.66}\selectfont $u_3$}
  \put(308.61,79.18){\fontsize{9.72}{11.66}\selectfont $v_2,u_2$}
  \put(309.51,17.30){\fontsize{9.72}{11.66}\selectfont $v_3,u_3$}
  \put(308.92,46.46){\fontsize{9.72}{11.66}\selectfont $v_2,s$}
  \end{picture}%
\caption{A split $\llcorner$-EPG graph whose unique clique tree admits an admissible partial orientation.}
\label{fig7}
\end{figure}

\begin{corollary}\label{coro6.1}
Suppose that the neighborhoods of distinct vertices in $I$ are incompatible. Then $G$ is an $\llcorner$-EPG graph if and only if there exists a vertex $s\in I$ such that
\begin{equation}\label{eq10}
 N(u)\cap N(v)=\emptyset
 \quad\text{for all distinct }u,v\in I\setminus\{s\}.
\end{equation}
\end{corollary}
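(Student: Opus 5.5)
We will use Theorem \ref{th3.1}. First we determine the clique trees of $G$. Since $C$ is a maximal clique and $I$ is independent, every other maximal clique has the form $K_u=N(u)\cup\{u\}$ with $u\in I$. Each $K_u$ is maximal: $K_u\subseteq K_{u'}$ would force $u=u'$, and $K_u\subseteq C$ is impossible since $u\notin C$. Could $C\subsetneq N(u)$? No, because $C$ is maximal and $N(u)\subseteq C$; so $N(u)\ne C$ and $N(u)$ is a proper subset of $C$. Hence $\mathcal{K}(G)=\{C\}\cup\{K_u\mid u\in I\}$.

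Next we show that the clique tree is unique: a star centred at $C$ with leaves $K_u$. In any clique tree, consider two leaf-type cliques $K_u,K_v$ and the path between them. By the incompatibility hypothesis there is $c\in N(u)\setminus N(v)$ with $c\in C$. Suppose $K_u$ were adjacent to $K_v$ or to some $K_{u'}$ rather than to $C$. We expect to derive a contradiction from the running-intersection property, applied to a vertex of $N(u)$ not lying in the neighbor's clique, which incompatibility supplies. Concretely, take the neighbor $K'$ of $K_u$ on the path toward $C$. Every vertex of $K_u\cap C=N(u)$ must lie in $K'$, so $N(u)\subseteq N(u')$ if $K'=K_{u'}$, contradicting incompatibility. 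Hence every $K_u$ is adjacent to $C$, and the tree is the star. The separators are $S_{CK_u}=N(u)$.

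Now we apply Definition \ref{def3.1} to the star. At a leaf $K_u$ there is only one edge, so \eqref{eq4} is vacuous. At $C$, condition \eqref{eq4} says that whenever $N(u)\cap N(v)\neq\emptyset$ we need $C\to K_u$ or $C\to K_v$, and \eqref{eq3} allows at most one outgoing edge $C\to K_s$. Incoming edges can be left unused, so the in-degree bound is harmless.

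For sufficiency, assume \eqref{eq10} holds for $s$. Orient $C\to K_s$ and leave all other edges undirected. Every intersecting pair then contains $s$, so the orientation is admissible. For necessity, take an admissible orientation. If $C$ has an outgoing edge $C\to K_s$, then every intersecting pair must include $s$, which is exactly \eqref{eq10}. If $C$ has no outgoing edge, then no pair intersects, and \eqref{eq10} holds for any $s\in I$, which is nonempty.

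The main obstacle is the uniqueness step. It must handle paths through several leaf cliques and the degenerate case $|I|=1$, where the tree is a single edge and the statement is trivial. For the general case we plan to argue via the first edge on the path from $K_u$ to $C$, as above.
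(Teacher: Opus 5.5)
Your proposal is correct and follows the paper's proof essentially step for step. You identify the maximal cliques as $C$ and the sets $N[u]$, use the first node on the clique-tree path from $N[u]$ to $C$ together with incompatibility to force the star, and then read off condition \eqref{eq10} from \eqref{eq3}--\eqref{eq4} at the center $C$. The only addition needed is an explicit check that $G$ is connected and chordal (the paper uses the perfect elimination order listing $I$ before $C$), since Theorem \ref{th3.1} is stated under those hypotheses.
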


\begin{proof}
The maximal cliques are exactly $C$ and $C_{u}=N[u]$ for $u\in I$. Indeed, a clique contains at most one vertex of $I$. A clique contained in $C$ extends to $C$, and a clique containing $u$ extends to $C_{u}$. Since a larger clique containing $u$ could contain no other independent vertex and no additional vertex of $C$ outside $N(u)$, each $C_{u}$ is maximal. The set $C$ is maximal by hypothesis. The graph is chordal: an ordering of the vertices of $I$, followed by the vertices of $C$, is a perfect elimination order.

\par Fix a clique tree. Every clique on the path from $C_{u}$ to $C$ must contain all of $N(u)$. If $C_{u}$ were not adjacent to $C$, the first node on that path would be some $C_{v}$ with $v\ne u$. Since $N(u)\subseteq C$ and $C_{v}\cap C=N(v)$, the inclusion $N(u)\subseteq C_{v}$ would imply $N(u)\subseteq N(v)$, contrary to incomparability. Hence every $C_{u}$ is adjacent to $C$. The unique clique tree is the star with center $C$ and leaves $C_{u}$.

\par The separator on $CC_{u}$ is $N(u)$. If an admissible orientation exists and some two such separators meet, condition \eqref{eq4} forces an outgoing edge at $C$. Let its endpoint be $C_{s}$. There can be no second outgoing edge at $C$, so every intersecting pair of separators must include $N(s)$. This is \eqref{eq10}. If no two separators meet, any $s\in I$ satisfies that condition.

\par Conversely, suppose \eqref{eq10} holds. Orient $C\to C_{s}$ and leave every other edge undirected. Each pair of intersecting separators at $C$ includes the separator on this outgoing edge. Leaves have no pairs of incident edges, and every in-degree and out-degree is at most one. The partial orientation is admissible, so Theorem \ref{th3.1} gives $G$ is an $\llcorner$-EPG graph. In addition, Fig. \ref{fig7} shows one split $\llcorner$-EPG graph which has a unique clique tree, and this clique tree admits a partial orientation.
\end{proof}

  \begin{figure}
  \centering
  \setlength{\unitlength}{1bp}%
  \begin{picture}(419.18, 125.01)(0,0)
  \put(0,0){\includegraphics{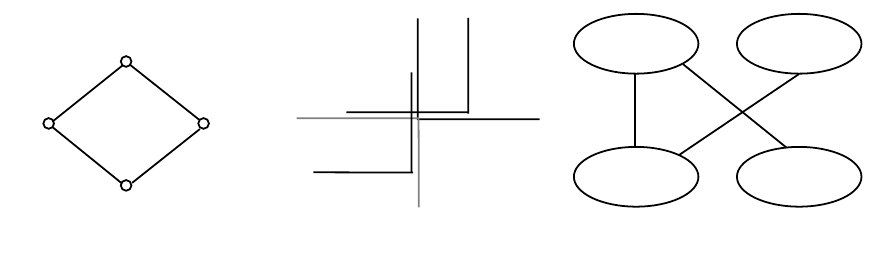}}
  \put(122.72,68.92){\fontsize{9.72}{11.66}\selectfont $P_{v_1}$}
  \put(225.42,100.50){\fontsize{9.72}{11.66}\selectfont $P_{v_2}$}
  \put(148.52,46.74){\fontsize{9.72}{11.66}\selectfont $P_{v_3}$}
  \put(235.06,59.04){\fontsize{9.72}{11.66}\selectfont $P_{v_4}$}
  \put(5.67,65.17){\fontsize{9.72}{11.66}\selectfont $v_1$}
  \put(101.25,63.53){\fontsize{9.72}{11.66}\selectfont $v_4$}
  \put(63.91,94.76){\fontsize{9.72}{11.66}\selectfont $v_3$}
  \put(65.58,31.00){\fontsize{9.72}{11.66}\selectfont $v_2$}
  \put(284.66,103.06){\fontsize{9.72}{11.66}\selectfont $v_1,v_3,v_4$}
  \put(284.66,39.21){\fontsize{9.72}{11.66}\selectfont $v_1,v_2,v_4$}
  \put(377.54,100.50){\fontsize{9.72}{11.66}\selectfont $v_2$}
  \put(377.36,35.80){\fontsize{9.72}{11.66}\selectfont $v_3$}
  \put(48.11,7.76){\fontsize{9.72}{11.66}\selectfont $C_4$}
  \put(174.50,8.86){\fontsize{9.72}{11.66}\selectfont $\mathcal{R}=\langle C_4,P\rangle$}
  \put(332.99,7.76){\fontsize{9.72}{11.66}\selectfont $T_{\mathcal{R}}$}
  \end{picture}%
  \caption{An example showing that relaxing the $\llcorner$-EPG graph condition in Theorem \ref{th2.1} to $B_1$-EPG graphs is not valid: $C_4$ is not a chordal graph, yet the graph $T_{\mathcal{R}}$ is a tree.}
  \label{fig8}
  \end{figure}

\section{Conclusion}
\par In this paper, we have established two equivalent characterizations of chordal $\llcorner$-EPG graphs and a polynomial-time recognition algorithm based on 2-SAT that determines whether a given clique tree admits an admissible partial orientation. We also identify obstructions to the $\llcorner$-EPG property within the class of strongly chordal graphs, and construct a class of graphs with a unique clique tree, eliminating the need to search for a clique tree that admits an admissible partial orientation.
\par The first of these characterizations does not extend from $\llcorner$-EPG graphs to $B_1$-EPG graphs. In Fig. \ref{fig8}, we give $C_4$ together with one of its $B_1$-EPG representations, $\mathcal{R}=\langle C_4,P\rangle$; however, the graph $T_{\mathcal{R}}$ constructed from this representation is a tree. Thus, the fact that $T_{\mathcal{R}}$ is a tree does not necessarily imply that $G$ is chordal.

\par While we fully characterize chordal $\llcorner$-EPG graphs, a polynomial-time recognition algorithm---the second part of the question posed in \cite{cameron2016edge}---remains open.

    \vspace{4mm}
    \noindent\textbf{Acknowledgements} This work was funded in part by the Scientific Research Projects of the Department of Education of Anhui Province (Grants No. 2025AHGXZK40333).
    
    \vspace{1mm}
	\noindent\textbf{Data Availability} Data sharing not applicable to this article as no datasets were generated or analyzed during the current study.
	
	\vspace{1mm}
	
	\noindent\textbf{Conflict of interest} The authors have no relevant financial or non-financial interests to disclose.

\end{document}